\newtheorem{theorem}{Theorem}    % Standard theorem environment
\newtheorem{lemma}{Lemma} 
\newtheorem{claim}{Claim}
\newtheorem{corollary}{Corollary}
\theoremstyle{definition}
\newtheorem{definition}[theorem]{Definition}
\newtheorem{remark}[theorem]{Remark}
\newtheorem*{remark*}{Remark}
\newcommand{\Z}{\mathbb{Z}}
\newcommand{\R}{\mathbb{R}}
\newcommand{\ds}{\displaystyle}
\title{A note on HOMFLY polynomial of positive braid links}
\author[T.Ito]{Tetsuya Ito}
\address{Department of Mathematics, Kyoto University, Kyoto 606-8502, JAPAN}
\email{tetitoh@math.kyoto-u.ac.jp}
\subjclass[2010]{Primary~57M25, Secondary~57M27}
\keywords{Positive braid link, HOMFLY polynomial}
\begin{document}

\begin{abstract}
For a positive braid link, a link represented as a closed positive braids, 
we determine the first few coefficients of its HOMFLY polynomial in terms of geometric invariants such as, the maximum euler characteristics, the number of split factors, and the number of prime factors. Our results give improvements of known results for Conway and Jones polynomial of positive braid links. In Appendix, we present a simpler proof of theorem of Cromwell, a positive braid diagram represent composite link if and only if the the diagram is composite.
\end{abstract}

\maketitle

\section{Introduction}

A knot or link $K$ in $S^{3}$ is a \emph{positive braid knot/link} (or,  \emph{braid positive}) if it is represented by the closure of a positive braid. Reflecting the positivity of braids, various knot invariants such as the signature\footnote{Here we adapt the convention for the signature opposite to Knotinto \cite{K} so that the right-handed trefoil, the closure of the positive $2$-braid $\sigma_1^{3}$, has signature $2$.}, or the Conway polynomial are positive for positive braid links. Here we say that a polynomial is positive if all the coefficients are non-negative. 

However, one should view the positivity of these invariants as a consequence of positivity of knot diagrams, not the positivity of braids since the same properties hold for an almost positive knot\footnote{Here we regard a positive knot, a knot that admits a diagram without negative crossing, is a special case of almost positive knots.}, a knot that admits a diagram with at most one negative crossing \cite{Cr,PT}.
 
In this note we observe a positivity of the HOMFLY polynomial for braid positive links after suitable normalization, and provide various additional information concerning its first few top coefficients. 
Let $P_K(v,a)$ be the HOMFLY polynomial of a knot or link $K$ defined by the skein relation\footnote{Here we use the convention adapted in Knotinfo \cite{K}.}
\[ v^{-1} P_{\raisebox{-2mm}{\includegraphics*[width=3.5mm]{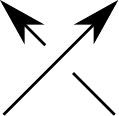}}}(v,z) -  vP_{\raisebox{-2mm}{\includegraphics*[width=3.5mm]{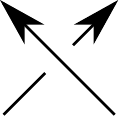}}}(v,z)= z P_{\raisebox{-2mm}{\includegraphics*[width=3.5mm]{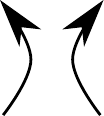}}}(v,z), \quad P_{\sf Unknot}(v,z) =1. \]
For a link $K$, we put
\begin{itemize}
\item $\#K = $ the number of components of $K$.
\item $\chi(K) = $ the maximal euler characteristic for (possibly non-connected) Seifert surface of $K$.
\item $s(K) =$ the number of split factors of $K$.
\item $p(K) =$ the number of prime factors of $K$.
\end{itemize}

Here we use the following convention for $p(K)$. Every link $K$ is a split union of non-split links as $K=K_1 \sqcup \cdots \sqcup K_{s(K)}$. For each $K_i$ we define $p(K_i)$ by
\[ p(K_i)=\begin{cases}\max \{n \: | \: K_i=K_i^1\# K_i^2 \# \cdots \# K_i^n,\ K^j_i \neq \mbox{unknot}\}, & K \neq \mbox{unknot} \\
0 & K = \mbox{unknot} \end{cases}\]
and define $p(K)=p(K_1)+\cdots+p(K_{s(K)})$. Thus in our definition, $p(\mathsf{Unlink})=0$.

\begin{definition}
We define \emph{the normalized HOMFLY polynomial} of a link $K$ by
\begin{align*}
\widetilde{P}_{K}(\alpha,z) & = (1+\alpha)^{-s(K)+1}(-\alpha)^{-\frac{-\chi(K)+2-\#K}{2}}(v^{-1}z)^{\#K-1}P_{K}(v,z)|_{-v^{2}=\alpha}  \in \Z[\alpha^{\pm 1},z^2]
\end{align*}
\end{definition}

When $K$ is a knot, the normalized HOMFLY polynomial is simply written as 
\[ \widetilde{P}_{K}(\alpha,z) =(-\alpha)^{-g(K)}P_{K}(v,z)|_{-v^{2}=\alpha}.\]

We show that for braid positive links, the normalized HOMFLY polynomial is positive and its first few top coefficients are determined by geometric invariants $p(K),s(K)$, and $\chi(K)$.

\begin{theorem}
\label{theorem:main}
Assume that $K$ is braid positive. Let $m(K)=-\chi(K)+s(K)$ and $d=d(K)=\frac{-\chi(K)+\# K}{2}$.
\begin{itemize}
\item[(i)] $\widetilde{P_K}(\alpha,z) \in \Z[\alpha,z^{2}]$ and $\widetilde{P_K}(\alpha,z)$ is positive.
\item[(ii)] Let $\ds \widetilde{P_K}(\alpha,z)=\sum_{i,j\geq 0} h_{i,j}(K)
\alpha^{i} z^{2j}$. 

\begin{itemize}
\item[(a)] $h_{i,j}(K)=0$ whenever $i+j>d(K)$.
\item[(b)] $\ds h_{i,d-i}(K)=\binom{p(K)}{i}$. 
\item[(c)] $\ds h_{0,d-1}(K)=m(K)$.
\item[(d)] $\ds h_{0,d-2}(K) = \frac{(m(K)-1)(m(K)-2)}{2}+p(K)-1$.
\item[(e)] $\ds (m(K)-2)p(K) \leq h_{1,d-2}(K) \leq (m(K)-2)p(K)+m(K)$.
\item[(f)] $\ds h_{0,d-3}(K)=\frac{(m(K)-1)(m(K)-2)(m(K)-6)}{6}+h_{1,d-2}(K)-2(p(K)-1)$.
\end{itemize}
\end{itemize}
\end{theorem}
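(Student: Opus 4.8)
The plan is to prove Theorem~\ref{theorem:main} as a single statement by induction on the number of crossings of a positive braid presentation of $K$, applying the HOMFLY skein relation to remove one band of the Bennequin surface; the normalization in the Definition is chosen precisely so that this induction becomes essentially a bookkeeping of the invariants $\chi,\#,s,p$.

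Two structural inputs come first. The first is \emph{multiplicativity}: a direct manipulation of the defining formula shows $\widetilde{P}_{K_1\#K_2}=\widetilde{P}_{K_1}\widetilde{P}_{K_2}$ and $\widetilde{P}_{K_1\sqcup K_2}=\widetilde{P}_{K_1}\widetilde{P}_{K_2}$ --- the factor $(1+\alpha)^{-s(K)+1}$ being exactly what cancels the split-union term $(v^{-1}-v)/z$ --- while $\chi,\#,s,p$ (hence $m,d$) add in the required way; one then checks that, read as assertions about the top few anti-diagonals $\{i+j=d\},\{i+j=d-1\},\dots$, parts (a)--(f) are consistent with multiplying two such truncated polynomials (in particular the interval in (e) is the Minkowski sum of two copies of the $p=1$ interval, and (d),(f) add correctly). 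Combined with Cromwell's theorem from the Appendix --- each prime or split factor of a positive braid link is again a positive braid link, with strictly fewer crossings --- this lets the inductive step dispatch every split or composite $K$, and leaves the core case $K$ prime, non-split and non-trivial, where $s(K)=1$, $p(K)=1$, $m(K)=-\chi(K)+1$, and a crossing-minimal positive word $\beta\in B_n$ uses every generator at least twice. The second input is \emph{the recursion}: if the word can be arranged as $\beta=\gamma\sigma_i^2\delta$, the skein relation at one crossing of $\sigma_i^2$ gives $P_{\hat\beta}=v^2P_{\widehat{\gamma\delta}}+vz\,P_{\widehat{\gamma\sigma_i\delta}}$; using $\chi(\widehat{\gamma\delta})=\chi(\hat\beta)+2$, $\#\widehat{\gamma\delta}=\#\hat\beta$ and $\#\widehat{\gamma\sigma_i\delta}=\#\hat\beta\pm1$, all prefactors cancel and this collapses to the positive recursion
\[
\widetilde{P}_{\hat\beta}=(1+\alpha)^{\,s(\widehat{\gamma\delta})-1}\,\widetilde{P}_{\widehat{\gamma\delta}}+\eta\,\widetilde{P}_{\widehat{\gamma\sigma_i\delta}},\qquad \eta\in\{1,z^{2}\},
\]
with $\eta=z^{2}$ exactly when $\#\widehat{\gamma\sigma_i\delta}=\#\hat\beta-1$, and $s(\widehat{\gamma\delta})\in\{1,2\}$ according as $\sigma_i$ does or does not survive in $\gamma\delta$. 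Availability of this recursion rests on a combinatorial lemma: a crossing-minimal positive braid word whose closure is not an unlink can be transformed by braid relations (commutations and $\sigma_j\sigma_{j+1}\sigma_j=\sigma_{j+1}\sigma_j\sigma_{j+1}$) into one containing a square $\sigma_i^2$.

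Granting these, the induction runs as follows. Positivity and $\widetilde{P}_K\in\Z[\alpha,z^{2}]$, and the vanishing $h_{i,j}=0$ for $i+j>d$ in (ii)(a), are immediate, since $(1+\alpha)^{s-1}$ and $\eta$ each raise the total $(\alpha,z^{2})$-degree by at most one while $d(\widehat{\gamma\delta})=d(\hat\beta)-1$ and $d(\widehat{\gamma\sigma_i\delta})\in\{d(\hat\beta)-1,d(\hat\beta)\}$; the base case is the unlink, with $\widetilde{P}=1$. For the remaining coefficients one records the exact shifts $m(\widehat{\gamma\delta})=m(\hat\beta)-2$, $m(\widehat{\gamma\sigma_i\delta})=m(\hat\beta)-1$, substitutes the inductive formulas (a)--(f) for the two simpler links into the recursion, and reads off $h_{i,d-i}$, $h_{0,d-1}$, $h_{0,d-2}$, $h_{1,d-2}$, $h_{0,d-3}$ of $\widetilde{P}_{\hat\beta}$; the identities (b),(c),(d),(f) and the two-sided bound (e) then follow from polynomial arithmetic, using multiplicativity again whenever an intermediate link splits or factors.

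I expect the real difficulty to sit in two places. The first is the combinatorial lemma producing the square $\sigma_i^2$: once destabilizations and one-occurrence (composite) generators are removed, one must massage the remaining word by braid relations to expose two adjacent equal letters, and this is the only genuinely non-formal step --- a representative such as $\sigma_1\sigma_2\sigma_1$ shows that minimality of the strand number is essential. The second, and more delicate, is the bookkeeping of the prime count through the recursion: removing the clasp $\sigma_i^2$ can split the link or destroy prime factors, so $p(\widehat{\gamma\delta})$ and $p(\widehat{\gamma\sigma_i\delta})$ are not functions of $p(\hat\beta)=1$ alone, and one has to verify --- via Cromwell applied to the two resolutions together with the fact that deleting a clasp changes $p$ in a controlled way --- that the contributions reassemble to $\binom{p(\hat\beta)}{i}$ on the top anti-diagonal and to the stated expressions one and two rows below. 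Everything below the top anti-diagonal is in principle a finite computation once the recursion and these facts are in place; confirming the precise bounds in (e) and the exact constant in (f) is where the bulk of the calculation lies. (An alternative for part (b) would be to invoke a known sharpness statement of Morton--Franks--Williams type for positive braid links together with an identification of the extremal HOMFLY coefficient, but I would prefer to derive it from the recursion for uniformity.)
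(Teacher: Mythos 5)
Your overall strategy coincides with the paper's: induct by applying the HOMFLY skein relation at a square $\sigma_j^{2}$ whose existence is guaranteed by van Buskirk's lemma (Theorem~\ref{theorem:resolution}), normalize so the recursion stays in $\Z[\alpha,z^{2}]$, and use Cromwell's visual primeness (Theorem~\ref{theorem:pbraid-visible}) to track $s$ and $p$. Your additional reduction via multiplicativity, $\widetilde{P}_{K_1\# K_2}=\widetilde{P}_{K_1}\widetilde{P}_{K_2}$ and $\widetilde{P}_{K_1\sqcup K_2}=\widetilde{P}_{K_1}\widetilde{P}_{K_2}$, is correct and is indeed compatible with (a)--(f) since $m$, $d$, $p$ are additive (granted that split/prime factors of positive braid links are again positive braid closures with fewer crossings, which does follow from Theorem~\ref{theorem:pbraid-visible}); the paper does not use this reduction, and it is a reasonable simplification of the composite/split cases. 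The genuine gap is the step you defer with the phrase ``deleting a clasp changes $p$ in a controlled way.'' That control is not a routine verification: it is the bulk of the paper's proof. To get (b) you must know exactly how $p(K_-)=p(\widehat{\gamma\delta})$ and $p(K_0)=p(\widehat{\gamma\sigma_i\delta})$ relate to $p(K)$ (e.g.\ in the paper's Case 3-1 one has $p(K_0)=p(K)-1$ and the missing binomial summand is recovered only because $K_-$ becomes split, contributing the factor $(1+\alpha)$), and for (e) the inductive bound $h_{1,d(K_0)-2}(K_0)\le (m(K_0)-2)p(K_0)+m(K_0)$ is useless unless $p(K_0)$ is pinned down: nothing in your outline excludes $p(K_0)\ge 2$ while $p(K)=1$, in which case the resulting estimate exceeds $(m-2)p+m$ for large $m$. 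The paper obtains this control only by fixing the normal form (3), taking $j$ minimal and $a_0$ maximal, and proving Claim~\ref{claim:key} via Lemma~\ref{lemma:technical} on square-free positive braids to classify the troublesome configuration $a_0=2$, $k=1$; none of this appears in your plan, and your square-producing lemma (left unproved, though citable as van Buskirk's Lemma 2) does not by itself place the square where the change of $p$ is known.

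Moreover, even with the correct bookkeeping, the single-step ``polynomial arithmetic'' you invoke provably does not yield the sharp bounds in (e). In the paper's Case 2-2 (where $p(K_-)=p(K)-1$, $p(K_0)=p(K)$) one application of the skein relation gives only $h_{1,d-2}(K)\ge (m(K)-2)p(K)-1$, and in Case 3-2, form (C), it gives only $h_{1,d-2}(K)\le (m(K)-2)p(K)+m(K)+1$; the paper must iterate the skein relation on the resolved links (through $K_{00}$, respectively through $K_{000}$ after rewriting $\sigma_2\sigma_1 B_0\sigma_2^2\sigma_1 B_1$ by a conjugation) to recover the stated inequalities. So the assertion that (b)--(f) ``then follow from polynomial arithmetic'' is not merely unjustified but false as stated, and an additional, genuinely non-formal argument is required exactly at the point your proposal stops.
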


\begin{remark}
The inequality (e) is sharp; For the $(2,k)$-torus knot/link, $h_{1,d-2}(T_{2,k})=k-3 = m(T_{2,k})-2$, and the connected sum of $k$ Hopf links 
$H_k=\#_{k}T_{2,2}$, $h_{1,d-2}(H_{k})=k(k-1)=(m(H_k)-2)p(k)+m(H_k)$. 
\end{remark}

The positivity (i)\footnote{Essentially the same positivity phenomenon of HOMFLY polynomial of positive braid links was proven in \cite[Theorem 2.2]{FW} in a different formulation.} reflects the positivity of braids since positive links do not have this positivity in general.

Theorem \ref{theorem:main} improves various known results for braid positive links. Here we state theorem for knot case for a sake of simplicity. First we give a more concrete formula of HOMFLY polynomial of prime positive braid knots.

\begin{corollary}
If $K$ is a prime positive braid knot other than unknot,
then 
\begin{align*}
P_K(v,z) &= v^{2g}z^{2g}\\
& +(2gv^{2g}-v^{2g+2})z^{2g-2}\\
& +((2g-1)(g-1)v^{2g} - h(K)v^{2g+2})z^{2g-4} \\
& +((\frac{(2g-1)(g-1)(2g-6)}{3}+h(K))v^{2g}+(\mbox{higher } v \mbox{ degree terms}))z^{2g-6}\\
& + \mbox{(lower } z \mbox{ degree terms)}
\end{align*}
where $g=g(K)$ and $h(K)=h_{1,g-2}(K)$ satisfies $2g-2\leq h(K) \leq 4g-2$.
\end{corollary}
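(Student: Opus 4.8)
The plan is to obtain this Corollary as a direct specialization of Theorem~\ref{theorem:main} to the case of a prime knot, so the work is entirely bookkeeping. First I would record the numerical simplifications. For a knot $K$ we have $\#K = 1$ and $s(K) = 1$, and since $K$ is prime and not the unknot the definition of $p$ forces $p(K) = 1$. Using $\chi(K) = 1 - 2g(K)$ gives $-\chi(K) = 2g - 1$, hence $d(K) = \frac{-\chi(K) + \#K}{2} = g$ and $m(K) = -\chi(K) + s(K) = 2g$.

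Next I would undo the normalization. For a knot the definition reads $\widetilde{P}_K(\alpha, z) = (-\alpha)^{-g} P_K(v,z)|_{-v^2 = \alpha}$, so substituting $\alpha = -v^2$ — and noting $(-\alpha)^g = v^{2g}$ and $\alpha^i = (-1)^i v^{2i}$ — turns the expansion in Theorem~\ref{theorem:main}(ii) into
\[ P_K(v,z) = \sum_{i,j \geq 0} (-1)^i\, h_{i,j}(K)\, v^{2g + 2i}\, z^{2j}. \]
Thus the coefficient of $z^{2g - 2k}$ in $P_K(v,z)$ equals $\sum_{i \geq 0} (-1)^i h_{i,\,g-k}(K)\, v^{2g+2i}$, and by part~(a) only the terms with $0 \leq i \leq k$ survive, which already gives the asserted shape of each line (a polynomial in $v^2$ of bounded degree).

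Finally I would plug in the coefficients computed in Theorem~\ref{theorem:main}(ii) under $p(K)=1$, $m(K)=2g$, $d(K)=g$: part~(b) gives $h_{0,g}(K) = \binom{1}{0} = 1$, $h_{1,g-1}(K) = \binom{1}{1} = 1$, and $h_{i,g-i}(K) = 0$ for $i \geq 2$; part~(c) gives $h_{0,g-1}(K) = 2g$; part~(d) gives $h_{0,g-2}(K) = \frac{(2g-1)(2g-2)}{2} + p(K) - 1 = (2g-1)(g-1)$; part~(e) gives $2g - 2 \leq h_{1,g-2}(K) \leq 4g - 2$; and part~(f) gives $h_{0,g-3}(K) = \frac{(2g-1)(2g-2)(2g-6)}{6} + h_{1,g-2}(K) - 2(p(K)-1) = \frac{(2g-1)(g-1)(2g-6)}{3} + h(K)$. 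Reading off the coefficients of $z^{2g}, z^{2g-2}, z^{2g-4}, z^{2g-6}$ from the display above and inserting these values reproduces exactly the stated expansion; the coefficients $h_{1,g-3}(K)$ and $h_{2,g-3}(K)$ are not determined by Theorem~\ref{theorem:main} and account for the ``higher $v$ degree terms'' (while $h_{3,g-3}(K) = \binom{1}{3} = 0$), and all $h_{i,j}(K)$ with $j \leq g - 4$ feed into the ``lower $z$ degree terms''. There is no genuine obstacle beyond Theorem~\ref{theorem:main} itself; the only points needing care are tracking the sign $(-1)^i$ coming from $\alpha = -v^2$ and simplifying the binomial-type expressions (e.g. $\frac{(2g-1)(2g-2)}{6} = \frac{(2g-1)(g-1)}{3}$) once $p(K)=1$.
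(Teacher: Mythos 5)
Your proposal is correct and is exactly the specialization the paper intends: the corollary is stated without separate proof as a direct consequence of Theorem~\ref{theorem:main}, obtained by setting $\#K=s(K)=p(K)=1$, $d(K)=g$, $m(K)=2g$, undoing the normalization via $\alpha=-v^{2}$ (which produces the signs $(-1)^{i}$), and reading off parts (a)--(f). All of your coefficient computations and the bound $2g-2\leq h(K)\leq 4g-2$ check out.
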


Let $\nabla_K(t)=\sum_{i}a_{2i}z^{2i}$ be the Conway polynomial of $K$.
Since $\nabla_K(t)=P_{K}(1,z)$ we have the following.

\begin{corollary}
\label{cor:Conway}
If $K$ is a braid positive knot, 
$a_{2g-2}(K)=2g(K)-p(K)$, and
\[ 2g(K)^{2}-(5+2p(K))g(K)+3p(K) \leq a_{2g-4}(K) \leq 2g(K)^{2}-(3+2p(K))g(K)+\frac{p(K)(p(K)+5)}{2}.\]
In particular, for a prime braid positive knot $K$,
\[ a_{2g-2}(K)=2g(K)-1,\quad 2g(K)^{2}-7g(K)+3 \leq a_{2g-4}(K) \leq 2g(K)^{2}-5g(K)+3.\]
\end{corollary}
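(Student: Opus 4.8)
The plan is to derive Corollary~\ref{cor:Conway} from Theorem~\ref{theorem:main} by specializing $v=1$. First I would record that, for a knot, the defining formula for the normalized HOMFLY polynomial rearranges to $P_K(v,z)=v^{2g}\widetilde{P_K}(-v^2,z)$, so that setting $v=1$ and using $\nabla_K(z)=P_K(1,z)$ gives the identity $\nabla_K(z)=\widetilde{P_K}(-1,z)$. By Theorem~\ref{theorem:main}(i) we may write $\widetilde{P_K}(\alpha,z)=\sum_{i,j\ge 0}h_{i,j}(K)\alpha^i z^{2j}$ with nonnegative exponents, and collecting powers of $z$ in $\widetilde{P_K}(-1,z)$ yields
\[ a_{2j}(K)=\sum_{i\ge 0}(-1)^{i}h_{i,j}(K). \]
Since $K$ is a knot we have $\#K=s(K)=1$, hence $d(K)=g(K)$ and $m(K)=-\chi(K)+1=2g(K)$, and Theorem~\ref{theorem:main}(ii)(a) makes $h_{i,j}(K)=0$ once $i+j>g(K)$, so every sum above is finite and involves only the indices $0\le i\le g(K)-j$.

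Next I would simply read off the two coefficients of interest. For $j=g-1$ only $i=0,1$ survive, so $a_{2g-2}(K)=h_{0,g-1}(K)-h_{1,g-1}(K)$; parts (c) and (b) of Theorem~\ref{theorem:main}(ii) give $h_{0,g-1}(K)=m(K)=2g$ and $h_{1,g-1}(K)=\binom{p(K)}{1}=p(K)$, proving $a_{2g-2}(K)=2g(K)-p(K)$. For $j=g-2$ the surviving indices are $i=0,1,2$, so $a_{2g-4}(K)=h_{0,g-2}(K)-h_{1,g-2}(K)+h_{2,g-2}(K)$; parts (d), (e), (b) give $h_{0,g-2}(K)=\frac{(2g-1)(2g-2)}{2}+p-1$, the bounds $(2g-2)p\le h_{1,g-2}(K)\le (2g-2)p+2g$, and $h_{2,g-2}(K)=\binom{p}{2}$ (writing $p=p(K)$). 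Substituting the two extreme values of $h_{1,g-2}(K)$ and simplifying the resulting polynomials in $g$ and $p$ produces the stated two-sided inequality for $a_{2g-4}(K)$, and the ``in particular'' clause is the specialization $p(K)=1$.

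I do not expect a genuine obstacle: once the specialization identity $\nabla_K=\widetilde{P_K}(-1,\cdot)$ is in place, the corollary is pure bookkeeping against the formulas of Theorem~\ref{theorem:main}. The only step that needs a line of care is the final arithmetic --- expanding $\tfrac{(2g-1)(2g-2)}{2}$, $\binom{p}{2}$, and $(2g-2)p$ and regrouping by powers of $g$. In fact that computation produces the slightly sharper lower bound $a_{2g-4}(K)\ge 2g(K)^{2}-(5+2p(K))g(K)+\tfrac{p(K)(p(K)+5)}{2}$, and the bound as stated in the corollary is the weaker consequence obtained by discarding the nonnegative term $\binom{p(K)}{2}$; the two coincide when $p(K)=1$, which is exactly why the prime case is two-sided sharp.
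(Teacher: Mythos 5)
Your proposal is correct and follows exactly the route the paper intends: the corollary is obtained by specializing $v=1$ (equivalently $\alpha=-1$), so that $\nabla_K(z)=\widetilde{P_K}(-1,z)$ and $a_{2j}(K)=\sum_i(-1)^i h_{i,j}(K)$, and then reading off coefficients from Theorem~\ref{theorem:main}(ii) with $d(K)=g(K)$, $m(K)=2g(K)$ --- which is why the paper states it with no separate proof. Your bookkeeping checks out, including the accurate side remark that the computation actually gives the slightly sharper lower bound with the extra nonnegative term $\binom{p(K)}{2}$, which collapses to the stated bound when $p(K)=1$.
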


This improves the inequalities of $a_{2g-2}, a_{2g-4}$ 
\[  g(K) \leq a_{2g-2}(K) \leq 2g(K)-1, \quad \frac{g(K)(g(K)-1)}{2} \leq a_{2g-4}(K) \leq 2g(K)^{2}-5g(K)+3 \]
proven in \cite{vB}.

Similarly, since the Jones polynomial $V_K(t)$ is obtained from the HOMFLY polynomial as $V_K(t)=P(t,t^{1/2}-t^{-1/2})$ we have the following. 

\begin{corollary}
\label{cor:Jones}
If $K$ is a non-split positive braid knot,
then 
\[t^{g(K)} V_{K}(t)= 1 + p(K)t^{2}+k(K)t^{3}+\mbox{(higher order terms)} \]
and the third coefficient $k(K)=h(K)+(1-2g(K))p(K)$ satisfies the inequality
\[ -p(K) \leq k(K)\leq -p(K)+2g(K). \]
\end{corollary}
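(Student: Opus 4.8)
The plan is to push the Jones specialization $v=t$, $z=t^{1/2}-t^{-1/2}$ through the normalized HOMFLY polynomial and then read off the lowest coefficients of $V_K(t)$ with the help of Theorem~\ref{theorem:main}. For a knot one has $\#K=1$, $s(K)=1$ and $\chi(K)=1-2g$ with $g=g(K)$, so $d(K)=g$ and $m(K)=2g$; also, by definition of the normalized polynomial, $P_K(v,z)=(-\alpha)^{g}\,\widetilde{P_K}(\alpha,z)$ with $\alpha=-v^{2}$. Setting $v=t$ makes $\alpha=-t^{2}$, so $(-\alpha)^{g}=t^{2g}$ and $z^{2}=t^{-1}(t-1)^{2}$. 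Writing $\widetilde{P_K}(\alpha,z)=\sum_{i,j\ge 0}h_{i,j}\alpha^{i}z^{2j}$ and restricting the sum to $i+j\le g$ by Theorem~\ref{theorem:main}(ii)(a), I obtain
\[
V_K(t)=\sum_{i+j\le g}h_{i,j}(-1)^{i}\,t^{\,2g+2i-j}\,(t-1)^{2j}.
\]
The smallest power of $t$ appearing here is $g$, attained only in the $(i,j)=(0,g)$ term via the constant term of $(t-1)^{2g}$, with coefficient $h_{0,g}=\binom{p(K)}{0}=1$; this is the leading $1$ of the normalized expansion in the statement.

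Next I would expand $(t-1)^{2j}=\sum_{\ell}\binom{2j}{\ell}(-1)^{\ell}t^{\ell}$ and, for each of the exponents $g$, $g+1$, $g+2$, $g+3$, collect the finitely many triples $(i,j,\ell)$ with $2g+2i-j+\ell$ equal to it. A one-line estimate ($3i+\ell\le 3$ for $t^{g+3}$, and strictly smaller for the lower exponents) shows that up to $t^{g+2}$ only $i=0$ can occur, while $t^{g+3}$ receives in addition exactly the single term $(i,j,\ell)=(1,g-1,0)$. Thus the relevant coefficients are $h_{0,g}=1$, $h_{0,g-1}=m(K)=2g$ by (c), $h_{0,g-2}$ by (d), $h_{0,g-3}$ by (f), and $h_{1,g-1}=\binom{p(K)}{1}=p(K)$ by (b), with the convention $h_{i,j}=0$ for $j<0$. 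Substituting and simplifying, the coefficient of $t^{g+1}$ in $V_K$ is $h_{0,g-1}-2g\,h_{0,g}=0$; that of $t^{g+2}$ is $h_{0,g-2}-(2g-2)h_{0,g-1}+\binom{2g}{2}h_{0,g}$, which collapses to $p(K)$; and that of $t^{g+3}$ is
\[
h_{0,g-3}-(2g-4)h_{0,g-2}+\binom{2g-2}{2}h_{0,g-1}-\binom{2g}{3}h_{0,g}-h_{1,g-1}=h(K)+(1-2g)\,p(K),
\]
where $h(K)=h_{1,g-2}(K)=h_{1,d-2}(K)$. This gives the claimed expansion with $k(K)=h(K)+(1-2g)p(K)$.

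The inequality then follows at once from Theorem~\ref{theorem:main}(ii)(e): with $m(K)=2g$ it reads $(2g-2)p(K)\le h(K)\le(2g-2)p(K)+2g$, and inserting this into $k(K)=h(K)+(1-2g)p(K)$ gives $-p(K)\le k(K)\le -p(K)+2g$. I expect the only genuine obstacle to be the identity for the $t^{g+3}$-coefficient: one must check that the cubic- and quadratic-in-$g$ parts coming from $h_{0,g-3}$ (via (f)), from $h_{0,g-2}$ (via (d)), and from the binomials $\binom{2g-2}{2}h_{0,g-1}$ and $\binom{2g}{3}h_{0,g}$ cancel exactly, leaving only $h(K)+(1-2g)p(K)$, and that no term with $i\ge 2$ — nor $(1,g-1,\ell)$ with $\ell\ge 1$ — ever reaches degree $t^{g+3}$. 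Everything else is routine substitution.
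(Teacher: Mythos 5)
Your proposal is correct and follows essentially the same route as the paper, which obtains the corollary simply by specializing the HOMFLY polynomial at $v=t$, $z=t^{1/2}-t^{-1/2}$ and feeding in the coefficient data (a)--(f) of Theorem~\ref{theorem:main}; the cancellation you flag for the $t^{g+3}$-coefficient does check out (the pure-$g$ terms from $h_{0,g-3}$, $h_{0,g-2}$, $\binom{2g-2}{2}h_{0,g-1}$ and $\binom{2g}{3}$ sum to zero, leaving $h(K)+(1-2g)p(K)$), and your constraint $3i+\ell\le r$ correctly rules out all other contributions. Note only that your expansion shows the statement's normalization should read $t^{-g(K)}V_K(t)$ (as in Corollary~\ref{cor:L-spaceknot}); this is a typo in the statement, not a flaw in your argument.
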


This improves the inequality (upper bound) of $k(K)$ 
\[ -p(K) \leq k(K) \leq \frac{3}{2}(-p(K)+2g(K)) \]
proven in \cite{Sto}.

One of our interests of braid positive knots comes from an L-space knot, a knot in $S^{3}$ that admits a (positive) L-space surgery. An L-space knot is prime \cite{Kr1}, fibered \cite{Ni} and strongly quasipositive \cite{He} (see also \cite{BS}). Here a knot $K$ is \emph{strongly quasipositive} if it is represented by the closure of $n$-braid which are product of positive band generators $\sigma_{i,j}=(\sigma_{i+1}\cdots \sigma_{j-1})\sigma_{j}(\sigma_{i+1}\cdots \sigma_{j-1})^{-1}$ ($1\leq i<j\leq n-1$) for some $n$. Although in general a fibered strongly quasipositive knot is not braid positive, currently all known examples of hyperbolic L-space knots are braid positive so it is interesting to compare properties of L-space knots and braid positive knots.

The (symmetrized) Alexander polynomial $\Delta_K(t)=\nabla_K(t^{1/2}-t^{-1/2})$ of  L-space knots has various special features \cite{OS,HW};
there is a sequence of integers $0<n_1<\cdots <n_{k-2}<n_{k-1}=g(K)-1<n_{k}=g(K)$ such that
\[
\Delta_K(t) = \sum_{i=1}^{k}(-1)^{k+i}(t^{n_k}+t^{-n_{k}}) +1=  (t^{g(K)}+t^{-g(K)})-(t^{g(K)-1}+t^{-(g(K)-1)})+ \cdots.
\]
and there are more constraints for sequence $0<n_1<\cdots <n_{k}$ \cite{Kr2}.
On the other hand, by Corollary \ref{cor:Conway}, a prime positive braid knot has the symmetrized Alexander polynomial of the form
\[ \Delta_K(t)=(t^{g(K)}+t^{-g(K)})-(t^{g(K)-1}+t^{-(g(K)-1)})-\alpha_3(K)(t^{g(K)-2}+t^{-(g(K)-2)})+\cdots \]
where
\[ -2g(K)+1\leq \alpha_3(K)=-h(K)+2g(K)-1 \leq 1 \]
Note that for a prime braid positive knot $K$, the third coefficient $\alpha_3(K)$ of the symmetrized Alexander polynomial of $K$ is equal to $-k(K)$, the minus of the third coefficient of the Jones polynomial. This gives the following constraint for Jones/HOMFLY polynomial of L-space positive braid knots.

\begin{corollary}
\label{cor:L-spaceknot}
If $K$ is an L-space positive braid knot, then 
\begin{itemize}
\item[(i)] $t^{-g(K)} V_{K}(t)= 1 + t^{2}+k(K)t^{3}+\mbox{(higher order terms)}$,
and the third coefficient $k(K)$ is either $0$ or $-1$.
\item[(ii)]  $h(K)=h_{1,g-2}(K)$ is either $2g(K)-2$ or $2g(K)-1$.
\end{itemize}
\end{corollary}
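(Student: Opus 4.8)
The plan is to derive Corollary~\ref{cor:L-spaceknot} as a formal consequence of Corollaries~\ref{cor:Conway} and~\ref{cor:Jones} together with the ``staircase'' shape of the Alexander polynomial of an L-space knot. We may assume $K$ is nontrivial. First I would invoke Krcatovich's theorem that an L-space knot is prime \cite{Kr1}, which forces $p(K)=1$. Substituting $p(K)=1$ into Corollary~\ref{cor:Jones} gives
\[ t^{-g(K)}V_K(t)=1+t^{2}+k(K)t^{3}+(\text{higher order terms}),\qquad k(K)=h(K)+1-2g(K), \]
and substituting $p(K)=1$ into Corollary~\ref{cor:Conway} --- equivalently, reading off the shape of $\Delta_K(t)$ recorded in the discussion preceding the statement --- identifies the coefficient of $t^{g(K)-2}$ in the symmetrized Alexander polynomial $\Delta_K(t)$ with $2g(K)-1-h(K)=-k(K)$. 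So both (i) and (ii) will follow once I show that this coefficient is $0$ or $1$.

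The only genuinely external input is the L-space hypothesis. By Ozsv\'ath--Szab\'o \cite{OS} (see also \cite{HW}), as recalled before the statement, the symmetrized Alexander polynomial of an L-space knot has all coefficients in $\{-1,0,1\}$, its nonzero coefficients alternate in sign as the exponent decreases, and the two largest exponents carrying a nonzero coefficient are $g(K)$ and $g(K)-1$, with coefficients $+1$ and $-1$. Hence the coefficient of $t^{g(K)-2}$ is $+1$ when $g(K)-2$ is the next exponent carrying a nonzero coefficient and $0$ otherwise, so it lies in $\{0,1\}$. I would dispose of the degenerate cases $g(K)\le 2$ (where $t^{g(K)-2}$ coincides with the central term or with a symmetric partner of a top term) directly, using that the normalization $\Delta_K(1)=1$ pins down the value. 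It then follows that $2g(K)-1-h(K)\in\{0,1\}$, i.e.\ $h(K)\in\{2g(K)-2,\,2g(K)-1\}$, which is (ii), and that $k(K)=-(2g(K)-1-h(K))\in\{0,-1\}$, which is (i).

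I do not expect a real obstacle here, since the statement is a corollary. The point that will require care is keeping all the normalizations consistent --- the chain relating $h_{1,g-2}(K)$, the Conway coefficient $a_{2g-4}(K)$, the coefficient of $t^{g(K)-2}$ in the symmetrized $\Delta_K$, and the third Jones coefficient $k(K)$ --- so that the two allowed L-space values land exactly on $\{2g-2,2g-1\}$ and $\{0,-1\}$, together with verifying that the small-genus cases are not exceptions. As a consistency check, the odd torus knots $T_{2,k}$ should realize $h(T_{2,k})=k-3=2g(T_{2,k})-2$ and $k(T_{2,k})=-1$, i.e.\ the lower of the two allowed values in each part.
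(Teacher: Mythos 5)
Your proposal is correct and follows essentially the same route as the paper, which obtains the corollary from Krcatovich's primeness of L-space knots, the staircase form of the Alexander polynomial from Ozsv\'ath--Szab\'o and Hedden--Watson, and the identification (via Corollaries \ref{cor:Conway} and \ref{cor:Jones}) of the coefficient of $t^{g(K)-2}$ in $\Delta_K$ with $2g(K)-1-h(K)=-k(K)$. Your sign bookkeeping is in fact the correct one (the paper's displayed formula for $\Delta_K$ of a prime positive braid knot has a sign slip in front of $\alpha_3(K)$, but its verbal statement $\alpha_3(K)=-k(K)$ agrees with your computation), so no change of approach is involved.
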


It is an interesting question to ask whether the Jones/HOMFLY polynomial of hyperbolic L-space knot shares the same properties of positive braid knots.

\begin{remark}
The $(2,3)$-cable of $(2,3)$-torus knot $K$ is a non-hyperbolic $L$-space knot which is not braid positive. $K$ is represented by a closure of a $4$-braid $(\sigma_2 \sigma_1 \sigma_3 \sigma_2)^{3}\sigma_1^{-3}$. The normalized HOMFLY polynomial is
\[ \widetilde{P_K}(\alpha,z)= (3 - \alpha -\alpha^{2} -2\alpha^{3})
+(9-5\alpha^{2}-\alpha^{3})z^{2}+(6-\alpha^{2})z^{4}+z^{6}
\]
and the Jones polynomial is
\[ t^{-3}V_K(t) = 1+t^{3}-t^{7}-t^{9}+t^{10}. \]
Thus one cannot expect the HOMFLY or Jones polynomial of general L-space knot has properties similar to positive braid knots. 
\end{remark}

\begin{remark}
After the first version of the paper appeared, Baker and Kegel informed
me of an example of a hyperbolic L-space knot which is not braid positive \cite{bk}.
Their example is the closed 4-braid $(\sigma_{2}\sigma_1\sigma_3\sigma_2)^3 \sigma_{1}^{-1}\sigma_2 \sigma_1^2 \sigma_2$. Its normalized HOMFLY polynomial is not positive, but it satisfies all the properties (a)--(f) in Theorem \ref{theorem:main}. Moreover, the Jones and HOMFLY polynomial of this example satisfies the property stated in Corollary \ref{cor:L-spaceknot}. 
It seems to be possible that Corollary \ref{cor:L-spaceknot} holds for hyperbolic L-space knots.  
\end{remark}

\section*{Acknowledgement}
The author has been partially supported by JSPS KAKENHI Grant Number
19K03490, 16H02145. He would like to thank K. Baker and M. Kegel for informing
me of an example of non-braid-positive hyperbolic L-space knots and for stimulating discussions.

\section{Proof}

\subsection{Review of properties of positive braid links}

Before starting the proof, we recall various special properties of positive braid links. 

In the following, by abuse of notation we often confuse a positive braid $\beta$ and its particular word representative $w$.
(For example, by a diagram $D_{\beta}$ of $K$ determined by a positive braid representative $\beta$, we actually mean the diagram $D_w$ obtained by taking a particular positive braid word representative $w$ of $\beta$.)

First of all, if $K$ is the closure of a positive $n$-braid $\beta$, by Bennequin's inequality 
\[ \chi(K)=n-e(\beta). \]
Here $e(\beta)$ denotes the exponent sum of $\beta$. Thus one can read $\chi(K)$ from a positive braid representative.

Actually, one can also read $s(K)$ and $p(K)$ from a positive positive braid representative. Let $K$ be a link in $\R^{3}$. 
Assume that the natural projection $\pi:\R^{3} \rightarrow \R^{2}$ ($\pi(x,y,z)=(x,y)$) gives a knot diagram $D=\pi(K)$.

\begin{definition}
A link diagram $D$ is 
\begin{itemize}
\item[--] \emph{irreducible}, if there is no crossing $c$ such that $D \setminus c$ is disconnected.
\item[--] \emph{split}, if there is a circle $c \subset \R^{2}$ which is disjoint from $D$, such that $c$ separates $\R^{2}$ into two connected components $U,V$ so that both $U\cap D$ and $V \cap D$ are non-empty. We call such a circle $c$ a \emph{splitting circle} of $D$.
\item[--] \emph{composite}, if there is a circle $c \subset \R^{2}$ which
 transversely intersects with $D$ at two non-double points, such that $c$ separates $\R^{2}$ into two connected components $U,V$ so that both $U\cap D$ are $V \cap D$ are not an embedded arc. We call such a circle $c$ a \emph{decomposing circle} of $D$.
\end{itemize}
\end{definition}

Obviously, if a diagram $D$ is split/composite then so is $K$.
The following theorem states that the converse is true.

\begin{theorem}\cite{Cr2}
\label{theorem:pbraid-visible}
Let $K$ be a positive braid link and let $D_{\beta}$ be a diagram of $K$ given by a positive braid representative $\beta$.
\begin{itemize}
\item[(i)] If $D_{\beta}$ is irreducible, $K$ is split if and only if $D_{\beta}$ is split.
\item[(ii)] If $D_{\beta}$ is irreducible and non-split, $K$ is non-prime if and only if $D_{\beta}$ is composite.
\end{itemize}
\end{theorem}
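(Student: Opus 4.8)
The plan is to deduce both parts from the structure of the Bennequin surface $\Sigma_\beta$ of $K=\widehat{\beta}$, the Seifert surface produced by Seifert's algorithm on the positive diagram $D_\beta$: it consists of $n$ parallel disks joined by one twisted band per letter of $\beta$, it realizes $\chi(\Sigma_\beta)=\chi(K)=n-e(\beta)$, and by Stallings' theorem it is a fiber surface for $K$. The one elementary observation underlying everything is that a braid closure diagram is split exactly when it is disconnected, which happens exactly when some generator $\sigma_i$ is missing from $\beta$, which is in turn exactly when $\Sigma_\beta$ is disconnected (the pieces of $\Sigma_\beta$, like the components of $D_\beta$, are joined precisely through the bands, i.e.\ the crossings).

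For (i), the implication ``$D_\beta$ split $\Rightarrow K$ split'' is immediate. Conversely, if $D_\beta$ is non-split then $\Sigma_\beta$ is connected, so $S^3\setminus K$ is a surface bundle over $S^1$ with connected fiber of negative Euler characteristic; such a manifold is aspherical, hence irreducible, so $K$ admits no splitting sphere and is non-split. (Irreducibility of $D_\beta$ is not actually needed here, but is kept for uniformity with (ii).)

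For (ii) assume $D_\beta$ irreducible and non-split, so $K$ is non-split and $\Sigma_\beta$ is connected. If $D_\beta$ is composite with decomposing circle $c$, splitting it into tangles $T_U,T_V$, then $K=K_U\#K_V$ with $K_U,K_V$ the closures of $T_U,T_V$; each carries a positive diagram, and because $D_\beta$ is irreducible neither summand is the unknot (a positive diagram of the unknot has, by the formula $-\chi=(\#\mathrm{crossings})-(\#\mathrm{Seifert\ circles})$ together with maximality for positive diagrams, a tree-like Seifert graph, hence a removable crossing, contradicting irreducibility), so $K$ is composite. For the converse, suppose $K=K_1\#K_2$ with $K_1,K_2$ nontrivial. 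Since $K$ is fibered so are $K_1,K_2$, and the fiber surface of a connected sum is the boundary connected sum of the fiber surfaces; by uniqueness of the fiber surface, $\Sigma_\beta$ is isotopic to $F_1\natural F_2$ with each $F_i$ not a disk. Equivalently, $\Sigma_\beta$ carries a properly embedded essential non-$\partial$-parallel arc $\alpha$ cutting it into two non-disk pieces, and the connected-sum sphere meets $\Sigma_\beta$ exactly along $\alpha$. It then remains to convert $\alpha$ into a decomposing circle of the planar diagram $D_\beta$: put $\alpha$ in normal form with respect to the brick decomposition of $\Sigma_\beta$ (minimizing intersections with the co-cores of the bands), analyze the finitely many local pictures of how $\alpha$ meets a disk or a band, and read off from an outermost piece of $\alpha$ a circle meeting $D_\beta$ transversely in two points; irreducibility and non-splitness of $D_\beta$ guarantee that this circle is genuinely decomposing rather than bounding an embedded arc on one side.

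The hard part is exactly this last step: the combinatorial normal-form analysis of essential arcs in $\Sigma_\beta$ and the verification that the resulting planar circle is a bona fide decomposing circle. Everything preceding it (fiberedness of positive braid closures, uniqueness of the fiber surface, irreducibility of fibered complements, and the Euler-characteristic bookkeeping) is classical; the real content of Cromwell's theorem, and the place where a ``simpler proof'' must do genuine work, is matching the intrinsic decomposition of the fiber surface with the extrinsic planar structure of the positive braid diagram.
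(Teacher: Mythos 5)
Your reduction of the hard direction of (ii) stops exactly where the proof has to begin, and you say so yourself: the step ``convert the essential separating arc $\alpha\subset\Sigma_\beta$ into a decomposing circle of $D_\beta$'' is left as a plan (``normal form with respect to the brick decomposition, analyze the finitely many local pictures, read off an outermost piece''), not an argument. Worse, the plan as stated is missing the one ingredient that makes such a conversion possible. Knowing only that $\alpha$ is essential and cuts $\Sigma_\beta$ into two non-disk pieces is not enough: essential arcs on the Bennequin surface of a positive braid generally \emph{must} intersect the co-cores of the bands in their isotopy class, and no local/outermost analysis will remove those intersections. The extra global property you need, and never extract or use, is that the arc coming from the decomposing sphere of a fibered link is invariant under the monodromy, $\phi(\alpha)=\alpha$. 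That is precisely what the paper exploits: writing $\phi$ as a product of positive Dehn twists $T_{c_1}\cdots T_{c_m}$ coming from the Murasugi-sum decomposition into $(2,n_i)$-torus link fibers, Lemma \ref{lemma:App1} shows via the right-veering order on arcs that a $\phi$-fixed arc can be isotoped off every twist curve $c_i$ (a positive twist strictly moves to the right any arc meeting its curve), and an arc disjoint from all $c_i$ is forced into a single Seifert disk $D_i$, where it visibly separates the bands to $D_{i-1}$ from the bands to $D_{i+1}$ and hence yields a decomposing circle of the diagram. Without the $\phi$-invariance (or a substitute such as Ozawa's incompressibility argument using positivity of the diagram), your normal-form scheme has no reason to terminate in the desired position, so the ``real content'' you correctly identify is genuinely absent from your proposal.

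The parts you do carry out are fine but peripheral: your asphericity argument for (i) works (the paper instead just invokes linking numbers), and your use of irreducibility plus the Euler-characteristic/Seifert-graph argument to rule out trivial summands in the easy direction of (ii) is a reasonable way to make precise what the paper dismisses as obvious. But as a proof of Theorem \ref{theorem:pbraid-visible}(ii) the proposal has a gap at its core step.
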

More generally, in \cite{Oz} Ozawa proved the same result for positive knots.

Although (i) is easy to see by looking at the linking numbers, the proof of (ii) is more complicated. We give a simplified proof in a spirit of Cromwell's original proof in Appendix.

Note that the assumption that $D_{\beta}$ is irreducible is always satisfied when
$\beta$ is a \emph{minimum positive braid representative}, which we mean that the number of strands of $\beta$ is minimum among all the positive braid representatives of $K$.

Finally, for a positive braid link one can apply the skein relation so that the resulting links are also braid positive.

\begin{theorem}\cite[Lemma 2]{vB}
\label{theorem:resolution}
Let $K$ be positive braid link which is not unlink.
Then there exists a positive braid $\beta$ such that $K$ is a closure of a positive braid of the form $\sigma_j^{2}\beta $. Moreover, such a positive braid representative $\sigma_{j}^2\beta$ of $K$ can be taken so that it is a minimum positive representative.
\end{theorem}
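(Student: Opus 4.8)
The plan is to argue by induction on the minimal number $n$ of strands among all positive braid representatives of $K$. If $n=1$ then $K$ is the unknot, which is excluded; if $n=2$ then $K$ is the closure of $\sigma_1^{k}$ for some $k\ge 2$ (the values $k=0,1$ give unlinks), and $\sigma_1^{k}=\sigma_1^{2}\cdot\sigma_1^{k-2}$ is already of the required shape and is a minimum positive representative. For the inductive step I would fix a minimum positive braid representative $w$ of $K$ on $n$ strands. If some generator $\sigma_i$ does not occur in $w$, then, since the generators acting on the strands $1,\dots,i$ commute with those acting on $i+1,\dots,n$, the word $w$ is equivalent to a product $w_{L}w_{R}$ on two disjoint blocks of strands, so the closure of $w$ is a split union $L\sqcup L'$ of the closures of $w_{L}$ and $w_{R}$, each a positive braid link on fewer strands. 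As $K$ is not an unlink, one of $L,L'$, say $L$, is not an unlink; then $w_{L}$ is a minimum positive representative of $L$ (otherwise $w$ itself could be shortened, contradicting minimality of $n$), so by the inductive hypothesis $L$ has a minimum positive representative $\sigma_{j}^{2}\beta'$; replacing $w_{L}$ by it and re-attaching $w_{R}$ gives a minimum positive representative of $K$ of the form $\sigma_{j}^{2}\beta$.

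It remains to treat the case in which $w$ involves every generator $\sigma_1,\dots,\sigma_{n-1}$. If each of them occurred exactly once, then $w$ has length $n-1$, and since $\sigma_{n-1}$ occurs once we may destabilize it away, obtaining a positive braid word on $n-1$ strands that again uses each generator once; iterating, $K$ is the unknot, which is excluded. Hence some generator $\sigma_{j}$ occurs at least twice in $w$ (and, destabilizing once more, one sees in fact that both $\sigma_1$ and $\sigma_{n-1}$ do). The problem is now reduced to a statement purely about the positive braid monoid: \emph{a positive braid word (such as our $w$) in which some generator occurs at least twice can, using only cyclic permutation of the word and the braid relations, be brought to a form containing $\sigma_{i}^{2}$ for some $i$} (equivalently: a cyclically ``square-free'' positive word on $n$ strands that uses every generator has length $n-1$). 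Since cyclic permutation and the braid relations change neither the closure nor the number of strands, the resulting word $\sigma_{i}^{2}\beta$ is automatically a minimum positive representative of $K$.

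This last reduction is the heart of the argument and the step I expect to require genuine care. The approach I would take is a minimization argument. The class of $w$ under cyclic permutation and braid relations is finite, since all these moves preserve the word length, so I can choose within it a word $v$, a generator $\sigma_{j}$ occurring at least twice in $v$, and two cyclically consecutive occurrences of $\sigma_{j}$, so as to minimize the length of the block $u$ lying between them; the aim is to show this minimum is $0$. If it is positive then $u$ contains no $\sigma_{j}$ and cannot commute past $\sigma_{j}$ (that would produce an adjacent pair $\sigma_{j}\sigma_{j}$, contradicting minimality), so $u$ contains $\sigma_{j-1}$ or $\sigma_{j+1}$; one then moves the letters of $u$ commuting with $\sigma_{j}$ out of the block and applies a braid relation $\sigma_{i}\sigma_{i\pm1}\sigma_{i}=\sigma_{i\pm1}\sigma_{i}\sigma_{i\pm1}$ at the surviving neighbouring generator to shorten $u$ or to transfer a short gap to an adjacent generator. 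The delicate case is a block of length one, that is, a subword $\sigma_{i}\sigma_{i\pm1}\sigma_{i}$: the braid relation then only toggles between $\sigma_{i}\sigma_{i+1}\sigma_{i}$ and $\sigma_{i+1}\sigma_{i}\sigma_{i+1}$, so to break this loop one must also inspect the two letters flanking the subword and check, configuration by configuration, that an adjacent equal pair can always be forced (or else that the word has length $n-1$, which has been excluded). This finite combinatorial analysis of the positive braid monoid is the main obstacle; the inductive scaffolding and the reductions above are routine.
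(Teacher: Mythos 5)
Your scaffolding---induction on the minimal strand number, splitting off a block when some generator is absent, and destabilizing when $\sigma_{n-1}$ occurs exactly once---is sound and is essentially the same outer structure as the paper's proof. But the statement you yourself isolate as the heart of the matter, namely that a positive word with a repeated generator can be brought, by cyclic permutation and positive braid relations, to a word containing some $\sigma_i^{2}$, is exactly the content of the paper's Lemma \ref{lemma:technical}, and your proposal does not prove it. You describe a minimization scheme (minimize the length of the block $u$ between two cyclically consecutive occurrences of a fixed $\sigma_j$) and you note yourself that it stalls at the toggle $\sigma_j\sigma_{j\pm1}\sigma_j \leftrightarrow \sigma_{j\pm1}\sigma_j\sigma_{j\pm1}$, deferring to a ``configuration by configuration'' inspection of flanking letters that is never carried out. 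This is a genuine gap, and not a small one: the obstruction is not local. After the toggle the repetition sits on $\sigma_{j\pm1}$, whose own gap can be blocked by $\sigma_{j\pm2}$, and so on down the chain of generators, so the difficulty cascades; the quantity you minimize (the gap length for one fixed generator) does not decrease under the only available move in the delicate case, and it is not clear that your process terminates or produces a contradiction.

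For comparison, the paper closes exactly this hole with a sharper well-founded measure: assuming $\beta$ is $i$-square free for every $i$, it writes $\beta=C_0\sigma_{n-1}C_1\sigma_{n-1}\cdots\sigma_{n-1}C_s$ with $C_i\in\langle\sigma_1,\dots,\sigma_{n-2}\rangle$, minimizes $(s,\ell(C_1))$ lexicographically, and shows that square-freeness for all $i$ forces $C_1$ to be the full descending chain $\sigma_{n-2}\sigma_{n-3}\cdots\sigma_2\sigma_1\sigma_a C_1'$ with $a\neq 1$; a braid relation plus commutations then pushes a letter of $C_1$ across $\sigma_{n-1}$ into $C_0$, contradicting minimality, so at most one $\sigma_{n-1}$ survives, and the destabilize-or-split induction you set up finishes the proof of Theorem \ref{theorem:resolution}. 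Some such descending-chain analysis (or another measure that genuinely decreases in your toggle case) is precisely what your proposal is missing; without it the argument is a plan rather than a proof. The rest of your write-up is fine modulo minor wording (in the split case, minimality of $w_L$ should be argued via strand number rather than word length).
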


This allows us to use induction in the realm of positive braid links. 
In Appendix, we attach a proof of this fact, as a byproduct of our technical lemma \ref{lemma:technical}, although it is essentially the same as the proof presented in \cite{vB}.

\subsection{Proof of Theorem \ref{theorem:main}}

\begin{proof}[Proof of Theorem \ref{theorem:main}]

We prove theorem by induction on $m(K)=-\chi(K)+s(K)$. 
$m(K)=0$ if and only if $K$ is unlink. For unlnk $K$, $P_{K}(v,z)=\left(\frac{v^{-1}-v}{z}\right)^{s(K)-1}$. Hence $\widetilde{P_K}(\alpha,z)=1$ whenever $m(K)=0$.

Assume that $m(K)>0$. By Theorem \ref{theorem:resolution},
there is a minimum positive braid representative $\beta$ of $K$ of the form
$\beta=\sigma_j^{2}\beta'$ where $\beta'$ is a positive braid.

Let $K_-$ and $K_0$ be the closure of braids $\beta'$, $\sigma_j \beta'$, respectively. Let 
\[ \delta=\frac{1}{2}(\# K- \# K_{0} +1).\]
That is, we define $\delta=0$ if two strands at the first two crossings $\sigma_j^2$ belong to the same component of $K$, and we define $\delta=1$ otherwise.
By the skein relation of the HOMFLY polynomial we have the following skein relation for the normalized HOMFLY polynomial.
\begin{equation}
\label{eqn:skein-base}
 (\alpha+1)^{s(K)-1}\widetilde{P}_{K}(\alpha,z) = (\alpha+1)^{s(K_-)-1}\widetilde{P}_{K_-}(\alpha,z) +z^{2\delta}(1+\alpha)^{s(K_0)-1}\widetilde{P}_{K_0}(\alpha,z).
\end{equation}

Since 
\[ d(K_0)= \begin{cases} \frac{\#K+1+ e(\beta)-1-n}{2} & (\delta=0)\\
\frac{\#K-1+ e(\beta)-1-n}{2} & (\delta=1)\\
\end{cases}
\]
we conclude 
\begin{equation}
\label{eqn:d}
d(K)=d(K_0)+\delta=d(K_-)+1
\end{equation}

By Theorem \ref{theorem:pbraid-visible} we can directly read $s(K)$ and $p(K)$ from the diagram. To keep track of how $s(K)$ and $p(K)$ change we divide various cases of positive braid diagrams. 

Let us write $\beta=\sigma_j^{2}\beta'$ as
\begin{equation}
\label{eqn:form}
\beta = \sigma_{j}^{a_0}A_0 B_0 \sigma_j^{a_1} A_1 B_1 \sigma_j^{a_2} A_2 B_2 \cdots \sigma_j^{a_k} A_k B_k
\end{equation}
where
\[ a_0\geq 2, a_{1},\ldots a_{k}> 0, A_i \in \langle \sigma_1,\ldots, \sigma_{j-1}\rangle, B_i \in \langle \sigma_{j+1},\ldots, \sigma_{n-1}\rangle. \]
(see Figure \ref{fig:rep1}).

Among a positive braid representative of the form (\ref{eqn:form}), we take one so that $j$ is as small as possible, and then take $a_0$ is as large as possible. 

\begin{figure}[htbp]
\includegraphics*[width=90mm,bb=148 609 462 715]{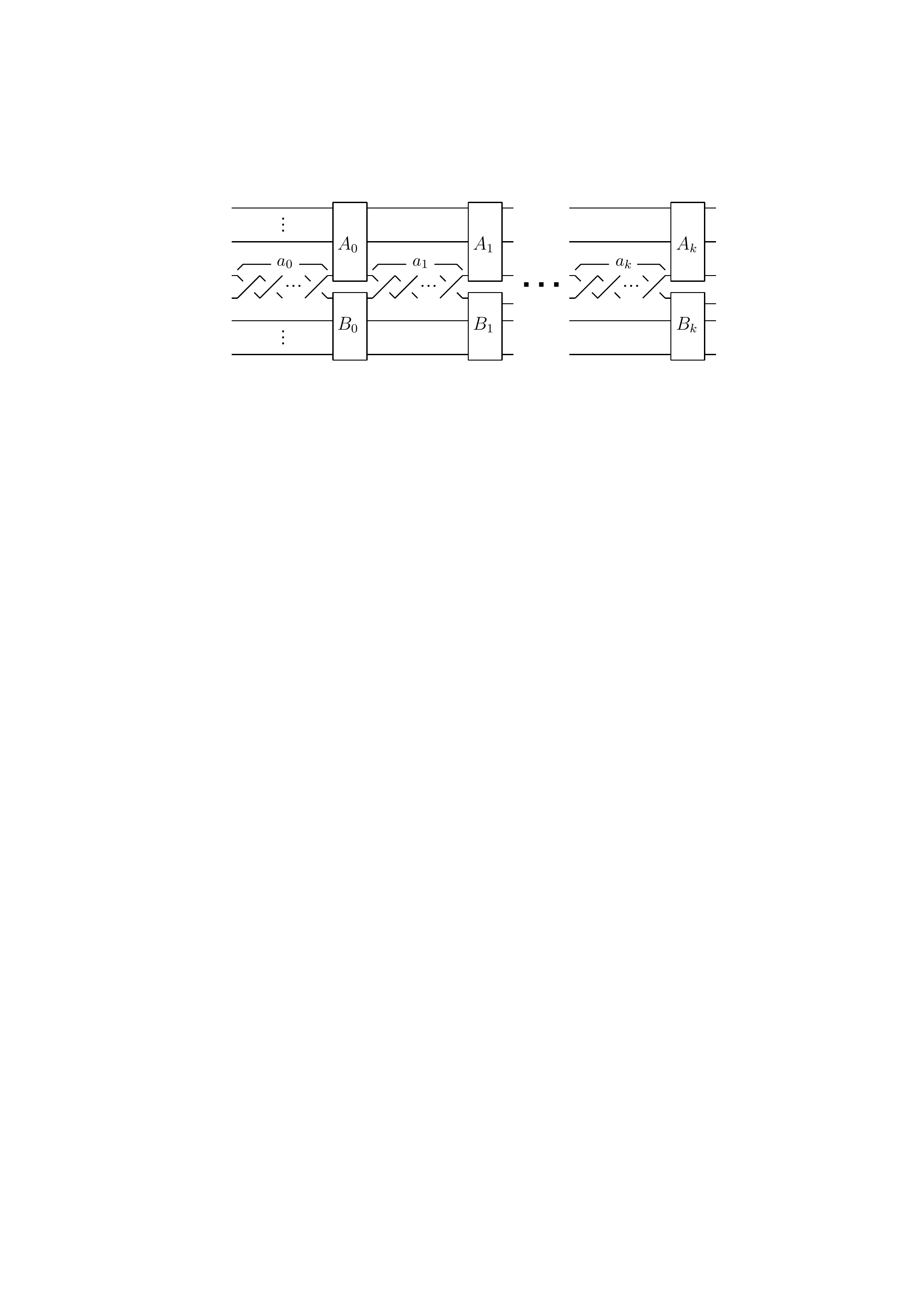}
\caption{Positive braid representative $\beta$} 
\label{fig:rep1}
\end{figure} 

\noindent
\underline{\bf Case 1: $a_0 >3 $}\\

In this case $s(K)=s(K_-)=s(K_0)$ and $p(K)=p(K_-)=p(K_-)$, hence $m(K)=m(K_-)+2=m(K_0)+1$.
All the assertions follow from the standard induction arguments, as we illustrate below.

By the skein relation (\ref{eqn:skein-base}) 
\[ h_{i,j}(K)= h_{i,j}(K_-) + h_{i,j-\delta}(K_0). \]
Therefore by (\ref{eqn:d})
\begin{align*}
h_{i,d(K)-j}(K) &= h_{i,d(K_-)+1-j}(K_-) + h_{i,(d(K_0)+\delta)-\delta-j}(K_0)\\
& = h_{i,d(K_-)-(j-1)}(K_-) + h_{i,d(K_0)-j}(K_0)
\end{align*}
This immediately shows (i).

We confirm assertions (ii) (a)--(d) for $K$;
\begin{align*}
h_{i,d(K)-j}(K)& = h_{i,d(K_-)-(j-1)}(K_-) + h_{i,d(K_0)-j}(K_0) = 0+0 = 0 \quad (i+d-j>d)\\
h_{i,d(K)-i}(K)&= 0+\binom{p(K_0)}{i} = \binom{p(K)}{i} \\
h_{0,d(K)-1}(K)&= h_{0,d(K_-)}(K_-) + h_{0,d(K_0)-1}(K_0) = 1+ m(K_{0}) = m(K)\\
h_{0,d(K)-2}(K)&= h_{0,d(K_-)-1}(K_-) + h_{0,d(K_0)-2}(K_0)\\
&= m(K_-) + \frac{(m(K_0)-1)(m(K_0)-2)}{2}+p(K_{0})-1 \\
&= m(K)-2 +  \frac{(m(K)-2)(m(K)-3)}{2}+p(K)-1 \\
&= \frac{(m(K)-1)(m(K)-2)}{2}+p(K)-1.
\end{align*}
As for the assertion (e), 
\begin{align*}
h_{1,d(K)-2}(K) & = h_{1,d(K_-)-1}(K_-) + h_{1,d(K_0)-2}(K_0) = \binom{p(K_-)}{1} + h_{1,d(K_0)-2}. 
\end{align*}
Hence
\[ p(K)(m(K)-2)=p(K_-)+ (m(K_0)-2)p(K_0) \leq h_{1,d(K)-2}(K)\]
and
\[  h_{1,d(K)-2}(K) \leq p(K_-)+ (m(K_0)-2)p(K_0) +m(K_0) \leq (m(K)-2)p(K) +m(K) -1 \]
Finally, for the assertion (f),
\begin{align*}
&h_{0,d(K)-3}(K)-h_{1,d(K)-2}(K) \\
&\qquad= h_{0,d(K_-)-2}(K_-) - h_{1,d(K_-)-1}(K_-) +h_{0,d(K_0)-3}(K_0) - h_{1,d(K_0)-2}(K_0)\\
&\qquad= \frac{(m(K_-)-1)(m(K_-)-2)}{2}+p(K_-)-2 - \binom{p(K_-)}{1}\\
&\qquad \qquad+ \frac{(m(K_0)-1)(m(K_0)-2)(m(K_0)-6)}{6} - 2(p(K_0)-1)\\
&\qquad= \frac{(m(K)-3)(m(K)-4)}{2}-2 +\frac{(m(K)-2)(m(K)-3)(m(K)-7)}{6} - 2(p(K)-1)\\
&\qquad= \frac{(m(K)-1)(m(K)-2)(m(K)-6)}{6}-2(p(K)-1).
\end{align*}

\noindent
\underline{\bf Case 2: $a_0 = 3$}\\

\noindent
{\it Case 2-1: $k>0$}\\

In this case $s(K)=s(K_-)=s(K_0)$ and $p(K)=p(K_-)=p(K_0)$. 
All the assertions follow from the same induction arguments as in Case 1-1.\\

\noindent
{\it Case 2-2: $k=0$}\\

In this case  $s(K)=s(K_{0})=s(K_{-})$ and $p(K)=p(K_{0})=p(K_{-})+1$.
All the assertions except the lower bound for (e) follow from almost the same standard induction arguments as in Case 1-1.

As for the inequality (e), we need an additional argument since usual induction argument only yields a weaker inequality;
\begin{align*}
h_{1,d(K)-2}(K) &= h _{1,d(K_0)-2}(K_{0})+p(K_{-}) \geq (m(K_0)-2)p(K_0)+p(K_{-})\\
& = (m(K)-3)p(K)+p(K)-1 = (m(K)-2)p(K)-1. 
\end{align*}

Recall that $K_0$ is the closure of $\sigma_j^{2}A_0B_0$.
Let $K_{00}$ and $K_{0-}$ be the closure of braids $\sigma_jA_0B_0$ and $A_0B_0$, respectively. The skein triple $(K_0,K_{0-},K_{00})$ is a situation of Case 3-1 below and 
\[ p(K_{0-})=p(K_{0})-1 = p(K)-1,\ m(K_{0-})=m(K_0)-1=m(K)-2.\]
By applying the skein relation twice, we get
\begin{align*}
h_{1,d(K)-2}(K) &= h_{1,d(K_0)-2}(K_{0})+p(K_{-}) = h_{1,d(K_{00})-2}(K_{00})+p(K_{0-})+m(K_{0-})+p(K_{-})\\
& = h_{1,d(K_{00})-2}(K_{00})+2p(K)+m(K)-4.
\end{align*}
Thus by induction
\begin{align*}
h_{1,d(K)-2}(K) & \geq (m(K_{00})-2)p(K_{00})+2p(K)+m(K)-4
\\
& =(m(K)-4)(p(K)-1)+2p(K)+m(K)-4 \\
& =(m(K)-2)p(K)
\end{align*}
as desired. \\

\noindent
\underline{\bf Case 3: $a_0 = 2$}\\

\noindent
{\it Case 3-1: $k=0$}\\

In this case $s(K)=s(K_{0})=s(K_{-})-1$ and $p(K)=p(K_{0})+1=p(K_{-})+1$. In this case the skein relation is
\[ \widetilde{P}_{K}(\alpha,z) = (\alpha\!+\!1)\widetilde{P}_{K_-}(\alpha,z) +z^{2\delta}\widetilde{P}_{K_0}(\alpha,z) \]
hence we get
\[ h_{i,j}(K)=j_{i-1,j}(K_-)+h_{i,j}(K_-)+h_{i,j-\delta}(K)\]
Therefore
\[ d_{i,d(K)-j}(K)=h_{i-1,d(K_-)+1-j}(K_-) + h_{i,d(K_-)+1-j}(K_-) + h_{i,d(K_0)}(K_0)\]

All the assertion follows from the standard induction arguments which are similar to the argument in the Case 1-1.

Here for reader's convenience, we attach a proof for the most complicated asssertion (e);
Since $h_{1,d(K)-2}(K)= h_{0,d(K_-)-1}(K_-) +  h_{1,d(K_-)-1}(K_-) + h_{1,d(K_0)-2}(K_-)$ 
\begin{align*}
h_{1,d(K)-2} & \leq m(K_-) + p(K_-) +(m(K_0)-2)p(K_0)+m(K_0)\\
&= m(K)-1 + p(K)-1 + (m(K)-3)(p(K)-1)+m(K) -1 \\
&= (m(K)-2)p(K) + m(K)
\end{align*}
and
\begin{align*}
h_{1,d(K)-2} & \geq m(K_-) + p(K_-) +(m(K_0)-2)p(K_0)\\
&= m(K)-1 + p(K)-1 + (m(K)-3)(p(K)-1) \\
&= (m(K)-2)p(K) +1\\ 
&> (m(K)-2)p(K).
\end{align*}

\noindent
{\it Case 3-2: $k=1$}\\

In this case, thanks to the minimality of $j$ and the maximality of $a_0$ for the braid representative (\ref{eqn:form}), the braid $\beta$ should be one of the following forms;

\begin{claim}
\label{claim:key}
If $k=1$ and $a_0=2$, then $\beta$ is one of the following form.
\begin{itemize}
\item[(A)] $\beta=\sigma_{1}^2 B_0 \sigma_{1}B_1$ 
\item[(B)] $\beta=\sigma_{1}^2 B_0 \sigma_{1}^{2}B_1$
\item[(C)] $\beta=\sigma_{2}^2\sigma_1 B_0 \sigma_{2}^{2}\sigma_1 B_1$
\end{itemize}
\end{claim}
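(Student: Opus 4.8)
The plan is to pin down $\beta$ using only the two optimality conditions behind the representative $(\ref{eqn:form})$: that the level $j$ is as small as possible, and that, among level-$j$ representatives of the form $(\ref{eqn:form})$, the leading exponent $a_0$ is as large as possible. I will use freely that the factors $A_i$ commute with the factors $B_\ell$; that each $\sigma_i$ with $i\le j-2$ commutes with $\sigma_j$ and with every $B_\ell$; that any cyclic conjugate of $\beta$, as well as the braid obtained from $\beta$ by reversing the strand order (the substitution $\sigma_i\mapsto\sigma_{n-i}$), is again a minimum positive braid representative of $K$; and (by the irreducibility of a minimum positive braid diagram recorded after Theorem \ref{theorem:pbraid-visible}) that each of $\sigma_1,\dots,\sigma_{n-1}$ occurs at least twice in $\beta$, since a lone occurrence of $\sigma_i$ is a cut vertex of $D_\beta$. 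Write $\beta=\sigma_j^{2}A_0B_0\,\sigma_j^{a_1}A_1B_1$. A first observation is that $a_1\in\{1,2\}$: if $a_1\ge 3$, the cyclic conjugate $\sigma_j^{a_1}A_1B_1\,\sigma_j^{2}A_0B_0$ is again a representative of $K$ of the form $(\ref{eqn:form})$ of level $j$, now with leading exponent $a_1>2=a_0$ --- contradicting maximality.

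\emph{Bounding the level.} Suppose $j\ge 3$; I claim $\beta$ has a representative of the form $(\ref{eqn:form})$ of level $<j$, which contradicts minimality of $j$. Since $B_0,B_1$ use only generators of index $>j$, all occurrences of $\sigma_{j-1}$ --- there are at least two --- lie in $A_0A_1$. Moving letters of index $\le j-2$ toward the $\sigma_j$-blocks by commutation, sliding $\sigma_{j-1}$ past $\sigma_j$ by the braid relation $\sigma_{j-1}\sigma_j\sigma_{j-1}=\sigma_j\sigma_{j-1}\sigma_j$ --- all feasible because $k=1$ and $a_0=2$ leave only $3$ or $4$ copies of $\sigma_j$ to work around --- and, when it helps, composing with the strand-reversal $\sigma_i\mapsto\sigma_{n-i}$ (which sends level $j$ to level $n-j$), one brings two copies of $\sigma_{j-1}$ next to each other, yielding a form-$(\ref{eqn:form})$ representative of level $j-1$. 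Hence $j\le 2$.

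\emph{Identifying the three forms.} If $j=1$ then $A_0,A_1$ are empty and $\beta=\sigma_1^{2}B_0\,\sigma_1^{a_1}B_1$, which is (A) when $a_1=1$ and (B) when $a_1=2$. If $j=2$ then $A_i=\sigma_1^{c_i}$ and $\beta=\sigma_2^{2}\sigma_1^{c_0}B_0\,\sigma_2^{a_1}\sigma_1^{c_1}B_1$. Here $\sigma_1$ occurs at least twice, so $c_0+c_1\ge 2$; and since $\sigma_1^{c_i}$ commutes with $B_0,B_1$, whenever some $c_i\ge 2$ a cyclic conjugate places the block $\sigma_1^{c_i}$ at the front, yielding a form-$(\ref{eqn:form})$ representative of level $1$ and contradicting minimality of $j$. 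Hence $c_0=c_1=1$. Finally, if $a_1=1$ then $\beta=\sigma_2^{2}\sigma_1B_0\,\sigma_2\sigma_1B_1$, and using $\sigma_1\sigma_2\sigma_1=\sigma_2\sigma_1\sigma_2$, the commutation of $\sigma_1$ past the $B_\ell$, and a cyclic conjugation, one obtains either a level-$1$ representative or a level-$2$ representative with leading exponent $3$ --- contradicting minimality of $j$ or maximality of $a_0$. So $a_1=2$, and $\beta$ is (C).

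The main obstacle is the level bound together with the case $a_1=1$: one must verify, by a finite but fiddly bookkeeping of how commutation, the braid relation $\sigma_{j-1}\sigma_j\sigma_{j-1}=\sigma_j\sigma_{j-1}\sigma_j$, cyclic conjugation, and the strand-reversal $\sigma_i\mapsto\sigma_{n-i}$ interact, that no arrangement of the letters of $A_0,A_1$ and no exponents $c_i,a_1$ outside the three listed cases can avoid producing either a form-$(\ref{eqn:form})$ representative of strictly smaller level or one of the same level with strictly larger $a_0$. It is precisely the shortness of the $\sigma_j$-content forced by $k=1$ and $a_0=2$ that confines this analysis to finitely many configurations.
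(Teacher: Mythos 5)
Your reduction of the easy cases is fine: $a_1\le 2$ follows from maximality of $a_0$ via cyclic conjugation, and at $j=2$ the exclusion of $c_i\ge 2$ by conjugating the block $\sigma_1^{c_i}$ to the front matches the paper's use of square-freeness. But the heart of the claim --- ruling out $j\ge 3$ and the remaining configurations at $j=2$ --- is exactly the step you defer to ``a finite but fiddly bookkeeping,'' and that step is not finite: $A_0$ and $A_1$ are arbitrary positive words in $\sigma_1,\dots,\sigma_{j-1}$ of unbounded length, so the hypotheses $k=1$, $a_0=2$ bound only the $\sigma_j$-content, not the set of configurations of $A_0,A_1$ you must handle. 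The paper tames this unboundedness with Lemma \ref{lemma:technical}: minimality of $j$ forces $A_0,A_1$ (and, in the iteration, the products $A''_0A'_1$, $A''_1A'_0$, etc.) to be square-free, and the lemma then gives a representative with at most one occurrence of the top generator. You have no substitute for this, and without it your proposed mechanism of ``bringing two copies of $\sigma_{j-1}$ next to each other'' to drop the level is both unjustified and not what actually happens: form (C) at level $2$ already shows two copies of $\sigma_{j-1}$ need not be mergeable, and for $j\ge 3$ the paper's conclusion is not a level-$(j-1)$ representative at all, but --- after an iterated descent peeling off $\sigma_{j-1},\sigma_{j-2},\dots,\sigma_2$ --- a positive word of the form $\sigma_1\cdot(\mbox{word without }\sigma_1)$, contradicting minimality of the braid index via destabilization; when some factor lacks its top generator the outcome is instead a representative of the form (\ref{eqn:form}) with $k=0$ at the \emph{same} level $j$. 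So the two contradictions you rely on (strictly smaller level, strictly larger $a_0$) are not the ones available in the hard cases.

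The same issue appears in your $j=2$, $a_1=1$ subcase: you assert one obtains ``a level-$1$ representative or a level-$2$ representative with leading exponent $3$,'' but from $\sigma_2^{2}\sigma_1B_0\sigma_2\sigma_1B_1=\sigma_2^{2}B_0\sigma_2\sigma_1\sigma_2B_1$ what one actually obtains is a positive word containing a single $\sigma_1$, and the contradiction is again non-minimality of the strand number (destabilization), not a smaller level or a larger $a_0$. (The strand-reversal $\sigma_i\mapsto\sigma_{n-i}$ you invoke does not help here either: it moves the level from $j$ to $n-j$, which need not be smaller, and plays no role in the paper's argument.) In short, the case skeleton is right, but the central analytic content --- Lemma \ref{lemma:technical} together with the descent ending in a destabilization contradiction --- is missing, and the claim that the problem reduces to finitely many configurations is false as stated.
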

The proof of claim will be given in the next section. \\

In the case (A), $s(K)=s(K_-)=s(K_0)$ and $p(K)=p(K_0)=p(K_-)$.
In the case (B), $s(K)=s(K_-)=s(K_0)$ and $p(K)=p(K_0)=p(K_-)-1$.
In both cases, all the assertions follow from almost the same induction argument as in Case 1-1.

In the case (C), $s(K)=s(K_-)=s(K_0)$ and $p(K)=p(K_-)=p(K_-)-2$. All the assertions except the upper bound of the inequality (e) follow from induction.
As for the inequality (e), we need an additional argument since induction yields a weaker inequality
\begin{align*}
h_{1,d(K)-2}(K) &= h _{1,d(K_0)-2}(K_{0})+p(K_{-}) \leq (m(K_0)-2)p(K_0)+m(K_0)+p(K_{-})\\
& = (m(K)-3)p(K)+m(K)-1+p(K)+2\\
& = (m(K)-2)p(K)+m(K)+1. 
\end{align*}

To show the upper bound of the inequality (e), we observe that
\[ \sigma_{2}\sigma_1 B_0 \sigma_2^{2}\sigma_1 B_1 \sim  \sigma_1\sigma_{2}\sigma_1 B_0 \sigma_2^{2} B_1 = \sigma_{2} \sigma_1 \sigma_{2}B_0 \sigma_2^{2} B_1. \]
Here we denote $X \sim Y$ if braids $X$ and $Y$ are conjugate. 
Thus $K_{0}$ is represented as a closure of the positive $(n-1)$-braid $\sigma_1^{2} B'_0 \sigma_1^{2} B'_1$. Here $B'_i$ is a braid (word) obtained by shifting the indices by $1$ (namely we replace $\sigma_i$ in $B_0$, $B_1$ with $\sigma_{i-1}$).

Let $K_{0-}, K_{00}, K_{000}, K_{00-}$ be the closure of braids
\[ B'_0 \sigma_1^{2} B'_1, \sigma_1B'_0 \sigma_1^{2} B'_1, \sigma_1 B'_0 \sigma_1 B'_1, \sigma_1 B'_0 B'_1\]
respectively. By Theorem \ref{theorem:pbraid-visible}, $p(K)=p(K_{00-})=p(K_{000})=p(K_{0-})-1=p(K_{-})-2$. By induction
\[ h_{1,d(K_{000})-2} \leq (m(K_{000})-2)p(K_{000})+m(K_{000}) = (m(K)-5)p(K)+m(K)-3.\]
By skein relation, 
\begin{align*}
h_{1,d(K)-2}(K) &= h_{1,d(K_0)-2}(K_{0})+p(K_{-})\\
& = h_{1,d(K_{00})-2}(K_{00})+p(K_{0-})+p(K_{-})\\
& = h_{1,d(K_{000})-2}(K_{000})+p(K_{00-})+p(K_{0-})+p(K_{-}).
\end{align*}
Therefore
\[ h_{1,d(K)-2}(K) \leq (m(K)-5)p(K)+m(K)-3+3p(K)+3 = (m(K)-2)p(K)+m(K)\]
as desired.\\

\noindent
{\it Case 3-2: $k\geq 2$}\\

In this case $s(K)=s(K_-)=s(K_0)$.
There are two candidates $c,c'$ of decomposing circles for $K_{-}$ which is not a decomposing circle for $K$ (see Figure \ref{fig:rep2}). 

If both $c$ and $c'$ are decomposing circles of $K_-$, then $A_1,\ldots,A_{k-1}$ contains no $\sigma_{j-1}$  and $B_1,\ldots,B_{k-1}$ contains no $\sigma_{j+1}$.

This means that we may write
\[ \beta = \sigma_{j}^{2}(A_0A_1 \cdots A_{k-1})(B_0 B_1 \cdots B_{k-1}) \sigma_j^{a_1+a_2+\cdots + a_k}A_k B_k\]
Thus in this case $k=1$ and $a_1=a_2=1$. Hence this case is nothing but Case 3-1.

Therefore we may assume that at most one of $c$ and $c'$ are decomposing circles of $K_-$. In this case $p(K)=p(K_0)=p(K_-)$, or $p(K)=p(K_0)=p(K_-)-1$. In both cases almost the same standard induction argument as in Case 1-1 proves all the assertions.

\begin{figure}[htbp]
\includegraphics*[width=90mm,bb=164 489 435 715]{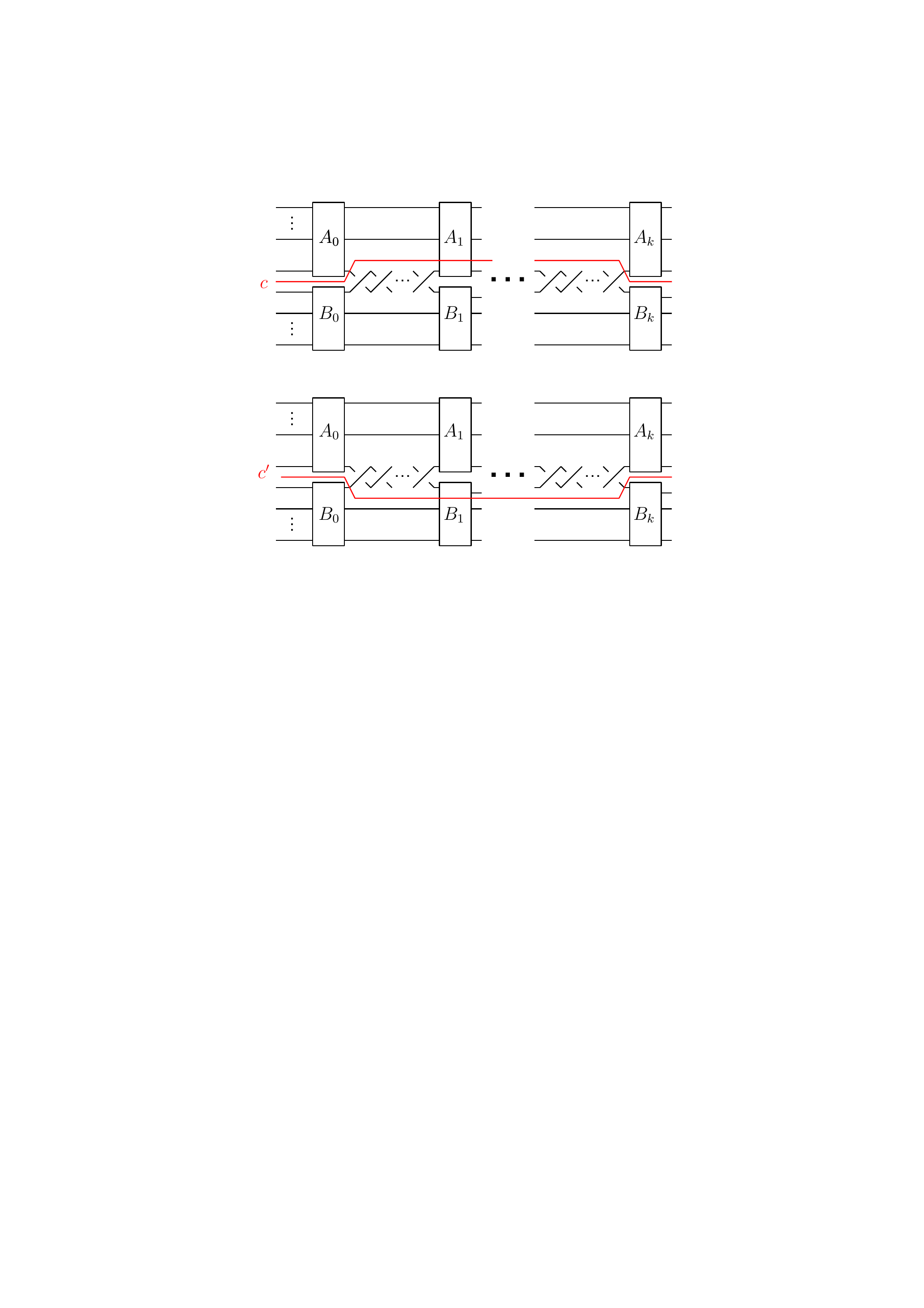}
\caption{Candidates of new decomposing circle for diagram of $K_-$ for Case 1-2. The decomposing circle $c$ (resp. $c'$) exists only if $A_1,\ldots,A_{k-1}$ contains no $\sigma_{j-1}$ (resp. $B_1,\ldots,B_{k-1}$ contains no $\sigma_{j+1}$)} 
\label{fig:rep2}
\end{figure} 

\end{proof}

\subsection{Proof of Claim \ref{claim:key}}
In this section we prove our technical assertion, Claim  \ref{claim:key}.

\begin{definition}
We say that a positive braid word is \emph{$i$-square free} if the word does not  contain $\sigma_i^{2}$. A positive braid $\beta$ is \emph{$i$-square free} if all positive braid representative of $\beta$ is $i$-square free.
\end{definition}

\begin{lemma}
\label{lemma:technical}
If a positive $n$-braid $\beta$ is $i$-square free for $i=1,\ldots,n-1$, then $\beta$ is represented by a positive braid word that contains at most one $\sigma_{n-1}$,
\end{lemma}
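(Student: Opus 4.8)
The plan is to argue by induction on $n$, the number of strands. For $n \le 2$ the claim is immediate, since an $i$-square free $1$- or $2$-braid either has no generators at all or is a single $\sigma_1$. So assume $n \ge 3$ and that the statement holds for all positive braids on fewer strands. Given a positive $n$-braid $\beta$ that is $i$-square free for every $i$, I would take an arbitrary positive word $w$ representing $\beta$ and focus on the occurrences of $\sigma_{n-1}$ in $w$. The idea is to show that one can always slide two consecutive occurrences of $\sigma_{n-1}$ together — using the commutation relations $\sigma_k \sigma_{n-1} = \sigma_{n-1}\sigma_k$ for $k \le n-3$ and the braid relation $\sigma_{n-2}\sigma_{n-1}\sigma_{n-2} = \sigma_{n-1}\sigma_{n-2}\sigma_{n-1}$ — to produce a factor $\sigma_{n-1}^2$, contradicting square-freeness, unless in fact there is at most one $\sigma_{n-1}$ from the outset.

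More concretely, suppose for contradiction that every positive word for $\beta$ contains at least two letters $\sigma_{n-1}$, and pick one $w$ minimizing the total number of $\sigma_{n-1}$'s (this minimum is $\ge 2$ by assumption). Write $w = u\,\sigma_{n-1}\,v\,\sigma_{n-1}\,u'$ where $v$ is a subword containing no $\sigma_{n-1}$; choose the two displayed occurrences to be consecutive, so $v \in \langle \sigma_1,\dots,\sigma_{n-2}\rangle$. Now the block $v$ lives in the braid group on the first $n-1$ strands, which is $i$-square free for $i = 1,\dots,n-2$: indeed any $\sigma_i^2$ appearing in a rewriting of $v$ would appear in a rewriting of $\beta$, and $i \le n-2 \le n-1$. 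By the induction hypothesis, $v$ can be rewritten as a positive word on $\sigma_1,\dots,\sigma_{n-2}$ containing at most one $\sigma_{n-2}$. If $v$ contains no $\sigma_{n-2}$, then $v$ commutes with $\sigma_{n-1}$, so $w = u\,v\,\sigma_{n-1}^2\,u'$, a square, contradiction. If $v$ contains exactly one $\sigma_{n-2}$, write $v = v_1 \sigma_{n-2} v_2$ with $v_1, v_2 \in \langle \sigma_1,\dots,\sigma_{n-3}\rangle$; then $v_1$ and $v_2$ each commute with $\sigma_{n-1}$, so
\[
\sigma_{n-1}\, v \,\sigma_{n-1} = v_1\,\sigma_{n-1}\sigma_{n-2}\sigma_{n-1}\, v_2 = v_1\,\sigma_{n-2}\sigma_{n-1}\sigma_{n-2}\, v_2,
\]
and substituting back gives a word for $\beta$ with strictly fewer $\sigma_{n-1}$'s, contradicting minimality. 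Either way we reach a contradiction, so some positive word for $\beta$ has at most one $\sigma_{n-1}$.

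The step I expect to be the main obstacle — and the one requiring the most care — is the reduction that lets me assume the two occurrences of $\sigma_{n-1}$ are \emph{consecutive} and that the intervening block $v$ is genuinely supported on $\sigma_1,\dots,\sigma_{n-2}$; one must be careful that applying the induction hypothesis to $v$ inside the ambient word is legitimate, i.e. that $v$ really is $i$-square free as an element of $B_{n-1}$ (not merely that the particular subword avoids $\sigma_i^2$), and that the rewriting of $v$ supplied by induction does not introduce new $\sigma_{n-1}$'s — which it cannot, since it only uses $\sigma_1,\dots,\sigma_{n-2}$. A secondary subtlety is bookkeeping the "minimal number of $\sigma_{n-1}$'s" well-ordering argument so that each rewriting step strictly decreases the count, which it does in the braid-relation case and is vacuous in the commutation case.
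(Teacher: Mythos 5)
Your proof is correct, and it takes a genuinely different route from the paper. The paper argues directly, without induction on the strand number: it writes $\beta = C_0\sigma_{n-1}C_1\sigma_{n-1}\cdots\sigma_{n-1}C_s$ with $C_i \in \langle\sigma_1,\dots,\sigma_{n-2}\rangle$, picks a representative minimizing $(s,\ell(C_1))$ lexicographically, and then unwinds $C_1$ letter by letter (its first letter must be $\sigma_{n-2}$, the next $\sigma_{n-3}$, and so on), eventually reaching a contradiction with the minimality. You instead induct on $n$ and apply the lemma itself to the block $v$ between two consecutive occurrences of $\sigma_{n-1}$: the crucial observation that $v$ inherits $i$-square-freeness as an $(n-1)$-braid is valid, because any positive rewriting of $v$ in $\langle\sigma_1,\dots,\sigma_{n-2}\rangle$ substitutes into a positive rewriting of $\beta$ (the inclusion of the positive submonoid is a homomorphism), so square-freeness of $\beta$ transfers; then either $v$ has no $\sigma_{n-2}$, producing a forbidden $\sigma_{n-1}^2$, or exactly one, and a single braid relation strictly decreases the $\sigma_{n-1}$-count, contradicting minimality. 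Your version is shorter and packages the paper's delicate letter-by-letter analysis of $C_1$ into the induction hypothesis, which also cleanly covers boundary cases (e.g.\ $C_1$ equal to a full descending word) that the paper's ``iterating the same argument'' treats only implicitly; the paper's non-recursive argument, on the other hand, exhibits the explicit structure of the intervening word, which matches the hands-on manipulations it performs later in the proof of Claim \ref{claim:key}. Both proofs rest on the same pivot: square-freeness is a property of the braid, not of a particular word, so subwords may be rewritten freely.
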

\begin{proof}
Let us write $\beta$ as 
\[ \beta = C_0 \sigma_{n-1} C_1 \sigma_{n-1} C_2 \sigma_{n-1}\cdots \sigma_{n-1}C_s\]
where $C_i \in \langle \sigma_1,\ldots,\sigma_{n-2}\rangle$.

Among such a word representative of $\beta$, we choose one so that $(s,\ell(C_1))$ is minimum (with respect to the lexicographical ordering).

Assume that $s > 1$. Since $\ell(C_1)$ is minimum and $A$ does not contain $\sigma_{n-1}^2$, $C_1$ is not empty and the first letter of $C_1$ should be $\sigma_{n-2}$. 

Moreover, $C_1 \neq \sigma_{n-2}$ because otherwise 
\[ \beta = C_0 \sigma_{n-1}\sigma_{n-2}\sigma_{n-1}C_2 \cdots = (C_0 \sigma_{n-2})\sigma_{n-1}(\sigma_{n-2}C_2) \cdots. \]
which contradicts the minimality of $(s,\ell(C_1))$.

Since $\beta$ is $(n-2)$ square free, the second letter of $C_1$ should be $\sigma_{n-3}$. $C_1 = \sigma_{n-2}\sigma_{n-3}$ cannot happen since this shows
\[ \beta= C_0 \sigma_{n-1}\sigma_{n-2}\sigma_{n-3}\sigma_{n-1}C_2 \cdots = C_0 \sigma_{n-1} \sigma_{n-2} \sigma_{n-1} (\sigma_{n-3}C_2) \cdots. \]

Iterating the same arguments, $C_1$ must be of the form
\[ C_1 = \sigma_{n-2}\sigma_{n-3}\cdots \sigma_2 \sigma_1 \sigma_{a} C'_1 \quad (a\neq 1)\]
for some $C'_1$, but this leads to
\begin{align*}
C_1 &= \sigma_{n-2}\sigma_{n-3}\cdots \sigma_{a+1} \sigma_{a} \sigma_{a-1} \sigma_a \sigma_{a-2} \cdots \sigma_2\sigma_1 C'_1 \\
&= \sigma_{n-2}\sigma_{n-3}\cdots\sigma_{a+1} \sigma_{a-1} \sigma_{a} \sigma_{a-1} \sigma_{a-2} \cdots \sigma_2\sigma_1 C'_1 \\
&= \sigma_{a-1} \sigma_{n-2}\cdots \sigma_2 \sigma_1 C'_1
\end{align*}
Since the first letter $\sigma_{a-1}$ can be pushed to $C_0$ across $\sigma_{j-1}$, this contradicts the minimality of $\ell(C_1)$.
Thus we conclude $s=0$ or $s=1$.
 \end{proof}

\begin{proof}[Proof of Claim \ref{claim:key}]
Let $\beta=\sigma_j^{2} A_0B_0 \sigma_{j}^{a_1}A_1B_1$, where $A_0,A_1 \in \langle \sigma_1,\ldots, \sigma_{j-1}\rangle,  B_0,B_1 \in \langle \sigma_{j+1},\ldots, \sigma_{n-1}\rangle.$ 

We assume that among such a representative of $\beta$, we take one so that $j$ is the smallest. 
If $j=1$ then $\beta$ should be of the form of (A) or (B).

Assume that $j>1$. Since we assume that $j$ is minimum, both $A_0$ and $A_1$ are $1,\ldots,j-1$ square-free. Thus by Lemma \ref{lemma:technical}, $A_0$ and $A_1$ contain at most one $\sigma_{j-1}$.

If $A_0$ or $A_1$ contains no $\sigma_{j-1}$, then the whole braid $\beta$ contains at most one $\sigma_{j-1}$. This means that we have a positive braid representative of $\beta$ of the form (\ref{eqn:form}) with $k=0$.

Thus we assume that both $A_0$ and $A_1$ contains exactly one $\sigma_{j-1}$, and we put  
\[ A_0=A'_0 \sigma_{j-1} A''_0,   A_1=A'_1 \sigma_{j-1} A''_1 \quad (A'_0,A''_0,A'_1,A''_1 \in \langle \sigma_1,\ldots,\sigma_{j-2}\rangle) \]

If $j=2$ then $A'_0,A''_0,A'_1,A''_1$ are empty word hence $\beta$ is the form of (C).
If $j\geq 3$, then
\begin{align*}
\beta &= \sigma_{j}^{2} A_0' \sigma_{j-1} A''_0 B_0 \sigma_{j}^{a_1}  A_1' \sigma_{j-1} A''_1 B_1 \\
& \sim \sigma_{j-1} A''_0 A_1' B_0 \sigma_{j}^{a_1}   \sigma_{j-1} A''_1 A_0' B_1\sigma_{j}^{2}. 
\end{align*}

Since $\beta$ is $1,\ldots,j-1$ square-free, $A''_0 A_1'$ and $A''_1 A'_0$ are $1,\ldots,j-2$ square-free.
Thus by Lemma \ref{lemma:technical}, $A''_0 A_1'$ and $A''_1 A'_0$ contains at most one $\sigma_{j-2}$. When $A_0$ or $A_1$ contains no $\sigma_{j-1}$, by the same argument we get a positive braid representative of $\beta$ of the form (\ref{eqn:form}) with $k=0$. Thus we assume that both $A''_0 A_1'$ and $A''_1 A'_0$ contain exactly one $\sigma_{j-2}$ and we put
\[ A''_0 A_1' = C \sigma_{j-2} C', \quad A''_1 A'_0 = C'' \sigma_{j-2}C''' \qquad (C,C',C'',C''' \in \langle \sigma_1,\ldots,\sigma_{j-3}\rangle).\]
Then
\begin{align*}
\beta \sim \sigma_{j-1} C \sigma_{j-2}C' B_0 \sigma_{j}^{a_1}   \sigma_{j-1} C'' \sigma_{j-2}C''' B_1\sigma_{j}^{2}\\
\sim  \sigma_{j-2}C' C'' B_0 \sigma_{j}^{a_1}   \sigma_{j-1} \sigma_{j-2}C'''C  B_1\sigma_{j}^{2}\sigma_{j-1} 
\end{align*}
Repeating the same argument,
we conclude
\begin{align*}
\beta &\sim \sigma_{1}B_0 \sigma_{j}^{a_1}  \sigma_{j-1}\sigma_{j-2}\cdots \sigma_3\sigma_2\sigma_{1}B_1 \sigma_{j}^{2}  \sigma_{j-1}\sigma_{j-2}\cdots \sigma_3 \sigma_2 \\
&=B_0 \sigma_{j}^{a_1}  \sigma_{j-1}\sigma_{j-2}\cdots \sigma_3 \sigma_{1}\sigma_2 \sigma_{1}B_1 \sigma_{j}^{2}  \sigma_{j-1}\sigma_{j-2}\cdots \sigma_3 \sigma_2 \\
&=B_0 \sigma_{j}^{a_1}  \sigma_{j-1}\sigma_{j-2}\cdots \sigma_3 \sigma_{2}\sigma_1 \sigma_{2}B_1 \sigma_{j}^{2}  \sigma_{j-1}\sigma_{j-2}\cdots \sigma_3 \sigma_2 \\
& \sim \sigma_1 (\sigma_{2}B_1 \sigma_{j}^{2}  \sigma_{j-1}\sigma_{j-2}\cdots \sigma_3 \sigma_2B_0 \sigma_{j}^{a_1}  \sigma_{j-1}\sigma_{j-2}\cdots \sigma_3 \sigma_{2})
\end{align*}
This contradicts our assumption that $\beta$ is a minimum positive braid representative.
 
\end{proof}

\section*{Appendix: Proof of basic properties of positive braid links}

\subsection*{Visiblity of primeness}

Here we give a short proof of Theorem \ref{theorem:pbraid-visible} (ii), that is a simplifcation of original Cromwell's proof is based on a fibration of positive braid links.

In \cite[1.6 Conjecture]{Cr2} Cromwell posed the conjecture that when $D$ is irreducible diagram whose canonical Seifert surface $\Sigma_D$ atatins the minimum genus Seifert surface, $D$ represents composite link if and only if $D$ is composite. Even though Ozawa proved more general result by a fairly simple argument \cite{Oz}, we hope that the proof presented here will be of independent interest toward the Cromwell's conjecture, because it uses the canonical Seifert surface in an essential way.

\begin{definition}[Murasugi sum]
Let $R_1$ and $R_2$ be oriented surfaces (with boundary). An oriented surface $R$ in $S^{3}$ is a \emph{Murasugi sum} of $R_1$ and $R_2$ if there is a 2-sphere $S\subset S^{3}$ that separates $S^{3}$ into two 3-balls $B_1$ and $B_2$, such that 
\begin{enumerate}
\item $R_1 \subset B_1$, $R_2 \subset B_2$.
\item $D:=R_1 \cap S = R_2 \cap S$ is a $2n$-gon. 
\end{enumerate} 
(See Figure \ref{fig:murasugi-sum}). We often say that $R$ is a Murasugi sum of $R_1$ and $R_2$ along the $2n$-gon $D$.
\end{definition}

\begin{figure}[htbp]
\includegraphics*[width=70mm]{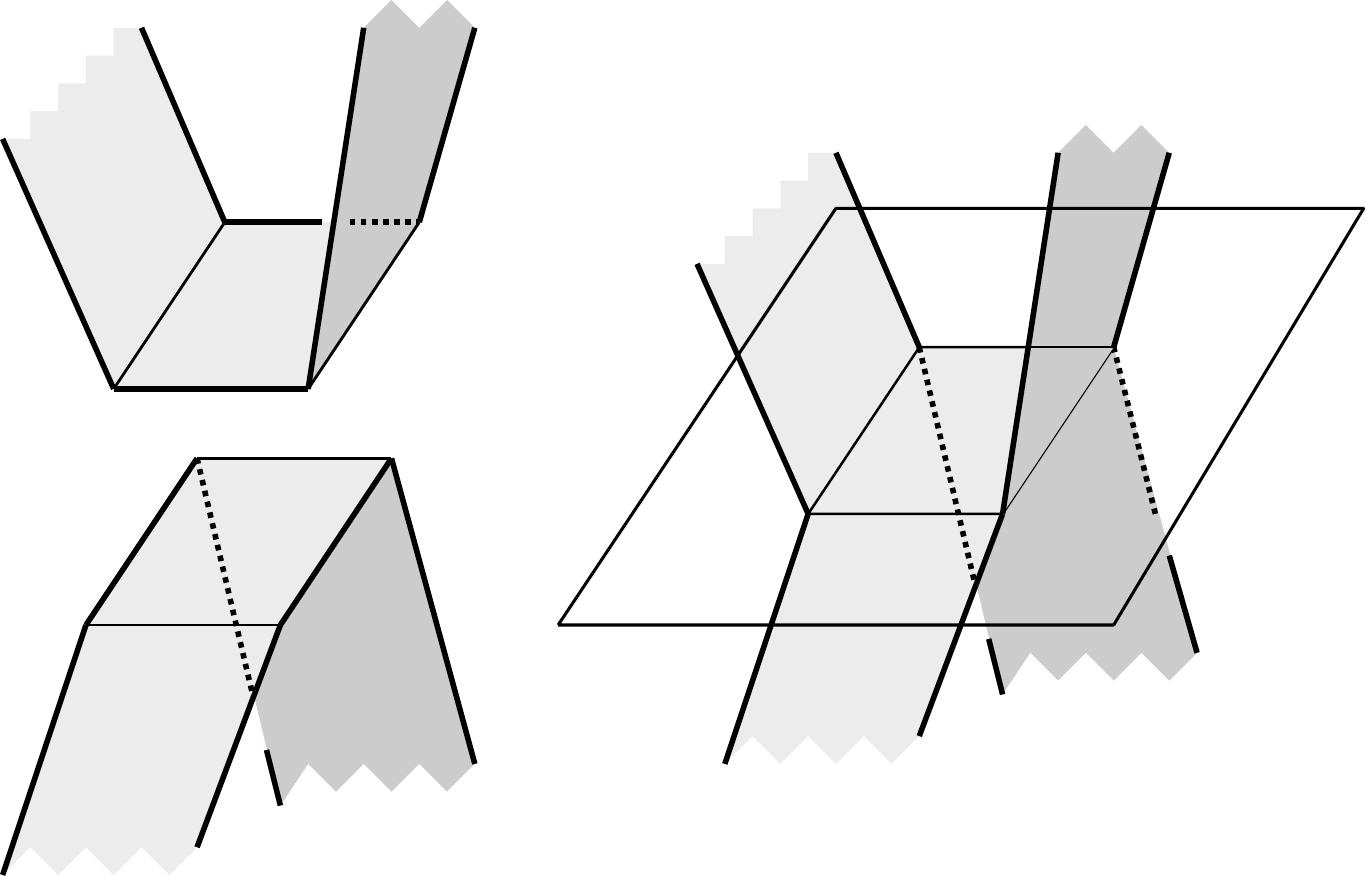}
\begin{picture}(0,0)
\put(-175,45) {$D$}
\put(-170,80) {$D$}
\put(-220,20) {$R_2$}
\put(-220,100) {$R_1$}
\put(-110,40) {$S$}
\put(-120,65) {$B_1$}
\put(-120,20) {$B_2$}
\put(-35,110) {$R$}
\put(-70,60) {$D$}
\end{picture}
\caption{Murasugi sum $R$ of $R_1$ and $R_2$.} 
\label{fig:murasugi-sum}
\end{figure} 

Among various nice properties of Murasugi sum, we use the following.

\begin{theorem}[Stallings \cite{St}, Gabai \cite{Ga}]
Assume that $K_i= \partial R_i$ is a fibered link with fiber $R_i$ $(i=1,2)$. If $R$ is a Murasugi sum of $R_1$ and $R_2$, then $K := \partial R$ is a fibered link with fiber $R$. Moreover, the monodromy $\phi :R \rightarrow R$ is given by $\phi=\phi_1 \phi_2$, where $\phi_{i}:R \rightarrow R$ is the monodromy of $K_i$,  viewed as a homeomorphism of $R$ by extending identity outside of $R_i$. 
\end{theorem}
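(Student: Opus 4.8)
The plan is to prove the Stallings--Gabai fibration theorem for Murasugi sums by working locally near the separating sphere $S$ and using the product structure on each side. First I would set up coordinates: since $K_i = \partial R_i$ is fibered with fiber $R_i$, the complement $S^3 \setminus K_i$ fibers over $S^1$ with fiber (the interior of) $R_i$; equivalently there is a fibration $\theta_i : S^3 \setminus K_i \to S^1$ with $\theta_i^{-1}(0) = R_i$ and monodromy $\phi_i : R_i \to R_i$. The key geometric observation is that because $R_1 \cap S = R_2 \cap S = D$ is a $2n$-gon lying on the sphere $S$, the region $B_i$ retracts onto $R_i$ in a fiber-preserving way: outside a neighborhood of $D$, $B_i \setminus K_i$ already has the product structure $R_i \times [0,1]$ coming from its fibration, and near $D$ the $2n$-gon can be pushed across the sphere. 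So the first real step is to construct, on each ball $B_i$, a fibration $\Theta_i : B_i \setminus K_i \to S^1$ restricting to $\theta_i$ in the interior and agreeing with the product structure $D \times S^1$ near $S$.

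Next I would glue. On $S \setminus \partial D$ the two product structures $D \times S^1$ induced from $B_1$ and $B_2$ must be arranged to match — this is where one uses that a $2n$-gon is a disk, so there is no obstruction to isotoping one product structure to the other rel $\partial D$. Having matched them, $\Theta_1$ and $\Theta_2$ glue to a map $\Theta : S^3 \setminus K \to S^1$ whose fiber over $0$ is exactly $R_1 \cup_D R_2 = R$, proving that $K = \partial R$ is fibered with fiber $R$. For the monodromy statement, I would trace a loop: following the flow of $\Theta$ once around $S^1$, a point of $R$ lying in the interior of $R_1$ gets moved by $\phi_1$ (and is fixed by the identity extension of $\phi_2$ across it, since outside $R_2$ nothing happens on that side), a point in the interior of $R_2$ by $\phi_2$, and on the overlap $D$ the two contributions compose. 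The order $\phi = \phi_1 \phi_2$ comes from the order in which the two halves of the ambient fibration are traversed; one should be careful about orientation conventions so that the composition, not its inverse, appears.

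The main obstacle I expect is the gluing/matching step on the sphere $S$: one has to show that the two product neighborhoods of $D$ in $S$ coming from the two fibrations can be made to coincide, and more fundamentally that each $\theta_i$ extends over the ``wrong'' side ball $B_{3-i}$ in a controlled way — i.e. that the fibration of $S^3 \setminus K_i$ can be arranged so that $B_{3-i} \setminus K_i$ looks like $D \times S^1$ union the collar of $R_i$. This is a Morse-theory / isotopy-extension argument: one uses that $R_i$ is incompressible and that $B_{3-i} \cap R_i$ is just the disk $D$, so there is a gradient-like vector field on $B_{3-i} \setminus K_i$ with no critical points pushing everything onto $D$. Making this precise without drowning in the topology is the delicate part; the homological and fiber-counting parts (Euler characteristic of $R$, that $\Theta$ has connected fibers, etc.) are then routine. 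I would likely cite the standard references for the technical heart and give only the conceptual picture, since this theorem is quoted rather than reproved in the paper.
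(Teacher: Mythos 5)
First, a point of comparison that matters for your last sentence: the paper does \emph{not} prove this statement at all --- it is quoted as a classical theorem with citations to Stallings and Gabai, and only its consequences (positive braid fibers as Murasugi sums of $(2,n_i)$-torus link fibers, monodromy a product of positive Dehn twists) are used. So simply citing \cite{St,Ga} would be entirely in the spirit of the paper, and your sketch has to be judged on its own rather than against an internal proof.

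Judged as a proof, your outline has the right geometric picture but two genuine gaps. The essential technical input --- that the fibration of $S^{3}\setminus K_i$ can be isotoped so that the complementary ball $B_{3-i}$ meets every fiber in a single disk parallel to $D$ (equivalently, $B_{3-i}\setminus K_i$ fibers as a product of disks) --- is exactly the heart of the theorem, and you assert it and then defer it to the references; nothing in your Morse-theoretic remark actually produces the required isotopy. More seriously, the gluing step as you describe it does not work as stated: you cannot glue $\theta_1$ and $\theta_2$ fiberwise over \emph{every} point of $S^{1}$, because at a generic angle $t$ the fiber of $\theta_1$ does not lie in $B_1$, its trace on $S$ has swept away from $D$, and there is no reason it matches the trace of the fiber of $\theta_2$; moreover a construction in which both monodromies are performed ``simultaneously'' needs a separate argument to identify the return map with $\phi_1\phi_2$, since $\phi_1$ and $\phi_2$ overlap on $D$ and do not commute. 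The Stallings--Gabai argument avoids all of this by splitting the base circle into two arcs: over the first arc the fiber sweeps through $B_1$ following the fibration of $K_1$ while a parallel copy of $R_2$ is dragged through $B_2$ as a product (contributing $\phi_1$ extended by the identity), and over the second arc the roles are reversed (contributing $\phi_2$). That two-arc decomposition is precisely what makes the fiber over every angle a copy of $R$ and what produces the composition $\phi=\phi_1\phi_2$ in a definite order; it is the step your ``match the product structures over $S\setminus\partial D$ and glue'' paragraph replaces with an assertion. If you intend to give a proof rather than a citation, these are the two ingredients you must supply.
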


We fix a positive $n$-braid (word) $\beta$ whose closure $\widehat{\beta}$ is $K$.
Applying Seifert's algorithm, we have a canonical Seifert surface $\Sigma_{\beta}$.
By construction, $\Sigma_{\beta}$ is made of two kind of pieces; the \emph{Seifert disk}, disjoint union of $n$ disks $D_1,\ldots,D_{n}$, and twisted bands connecting $i$-th and $(i+1)$-st disks that corresponds to each $\sigma_{i}$ in the braid $\beta$.

Let $n_{i}$ be the number of $\sigma_i$ in the braid word. Then the canonical Seifuert surface $\Sigma_{\beta}$ is Murasugi sum of the canonical Seifert surfaces of $(2,n_{i})$ torus knot/links (Figure \ref{fig:surface} (i)). Thus $K$ is fibered with fiber $\Sigma_{\beta}$.

The monodromy of the $(2,p)$ torus knot/link is a product of Dehn twist
\[ \phi_i = T_{c_1} \cdots T_{c_{p-1}} \]
where $c_j$ is a simple closed curve given in Figure \ref{fig:surface} (ii) and $T_c$ denotes the Dehn twist along $c$.
Hence the monodromy $\phi$ of the positive braid knot/link $K$ is written as a product of positive Dehn twists, reflecting the positivity of braids.
\[ \phi = \phi_1\phi_2\cdots \phi_{n-1} = T_{c_1}\cdots T_{c_{m}}\]
where $m$ is the $1$st betti number of $\Sigma_{\beta}$.

\begin{figure}[htbp]
\includegraphics*[width=70mm]{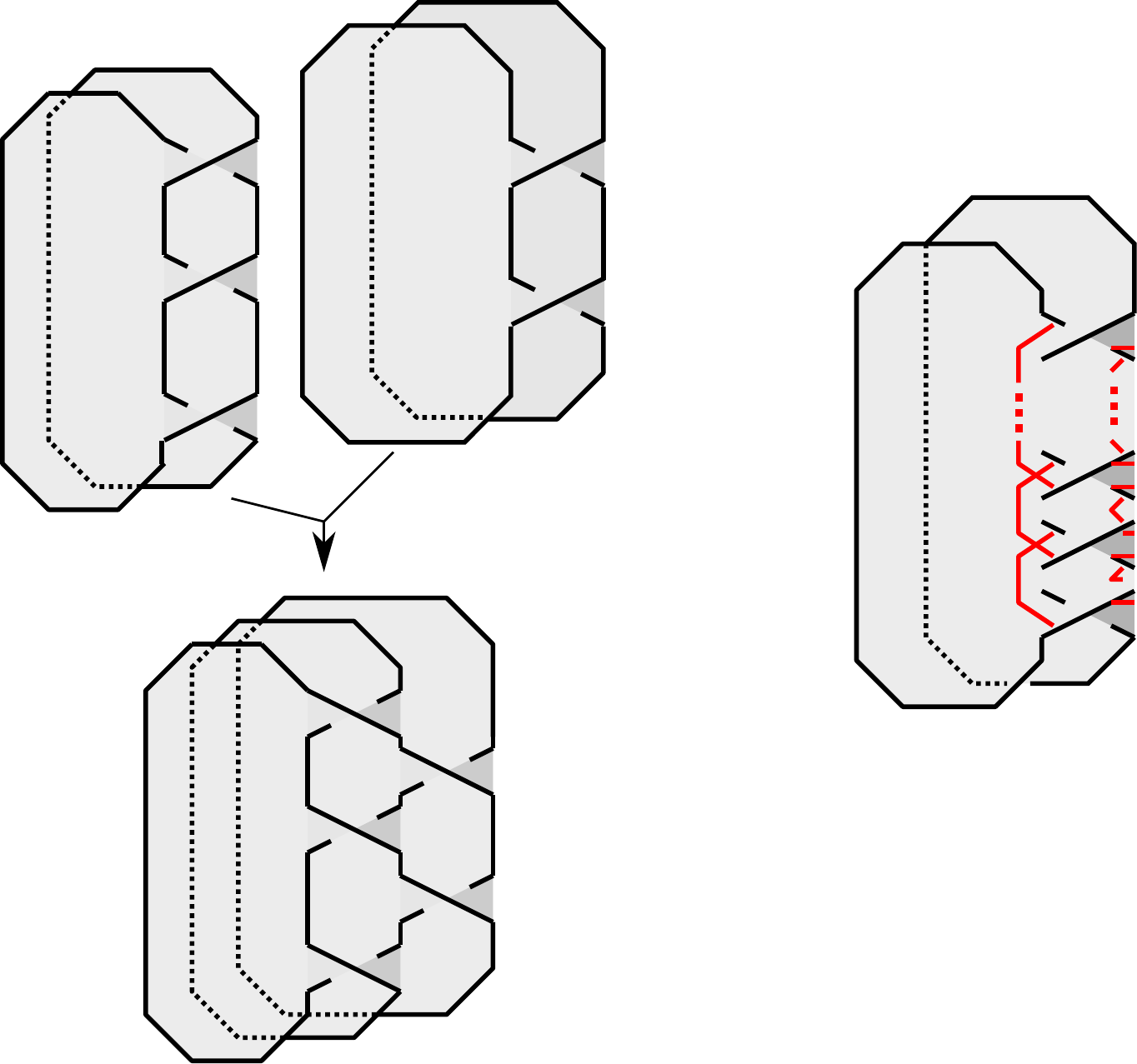}
\begin{picture}(0,0)
\put(-220,180) {(i)}
\put(-90,180) {(ii)}
\put(-45,65) {$c_1$}
\put(-45,85) {$c_2$}
\put(-50,120) {$c_{p-1}$}
\end{picture}
\caption{(i) Canonical Seifert surface $\Sigma_{\beta}$ of a positive braid $\beta$ as the Murasugi sum of canonical Seifert surface of $(2,n)$-torus knots/links. (ii) The monodromy of the  $(2,p)$-torus knots/links. } 
\label{fig:surface}
\end{figure}

\begin{lemma}
\label{lemma:App1}
Let $\gamma \in \Sigma_{\beta}$ be a properly embedded arc.
$\phi(\gamma)=\gamma$ if and only if $\gamma$ can be put so that $\gamma \cap c_i =\emptyset$ for every $c_i$. (Here $=$ means isotopic relative to the boundary).
\end{lemma}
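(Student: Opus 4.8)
The plan is to analyze the intersection pattern of $\gamma$ with the curves $c_i$ and use the fact that the monodromy $\phi = T_{c_1}\cdots T_{c_m}$ is a product of \emph{positive} Dehn twists — the same positivity that drives the rest of the paper. The ``if'' direction is immediate: if $\gamma$ can be isotoped off every $c_i$, then each $T_{c_i}$ fixes (the isotopy class of) $\gamma$, so $\phi(\gamma) = \gamma$. The substance is the ``only if'' direction.

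For the forward direction, I would argue by contradiction: suppose $\gamma$ cannot be isotoped off all the $c_i$ simultaneously, and among all representatives in its isotopy class choose one minimizing the total geometric intersection number $\sum_i |\gamma \cap c_i|$, which is then positive. The key point is a \emph{no-cancellation} principle for products of positive Dehn twists acting on arcs: because all twists are in the same (positive) direction and the surface is a fiber surface built as a Murasugi sum (so the curves $c_i$ form a ``chain''-like configuration with controlled intersections $|c_i \cap c_{i+1}| = 1$ and $c_i \cap c_j = \emptyset$ otherwise within each $(2,p)$-block), applying $T_{c_1}, \dots, T_{c_m}$ successively to $\gamma$ only adds intersection strands and they cannot be undone by later twists — each twist pushes strands of $\gamma$ across $c_i$ in a coherent direction. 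Concretely, I would track a ``weight'' such as $\sum_i |\phi_{(k)}(\gamma) \cap c_i|$ where $\phi_{(k)} = T_{c_1}\cdots T_{c_k}$, and show it is non-decreasing in $k$, with a strict increase at the first index $k_0$ for which $\gamma$ essentially meets $c_{k_0}$. Hence $|\phi(\gamma) \cap c_\ell| > |\gamma \cap c_\ell|$ for a suitable $\ell$, so $\phi(\gamma) \neq \gamma$, a contradiction. An alternative, cleaner route is to invoke right-veering / Bennequin-type positivity: a product of positive Dehn twists is right-veering, and a right-veering mapping class fixes a properly embedded arc only if the arc is disjoint from (the support of) all the twisting curves — this is essentially Honda–Kazez–Matić's characterization, and I would cite it if permissible, otherwise reprove the needed special case using the explicit chain configuration.

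The main obstacle is making the ``positivity prevents cancellation'' step rigorous on a surface where the $c_i$ do intersect each other (so $T_{c_i}$ and $T_{c_{i+1}}$ do not commute and a naive strand count can fluctuate). The fix I anticipate is to work block by block along the Murasugi-sum decomposition: within a single $(2,p)$-torus-link block the curves $c_1, \dots, c_{p-1}$ form a linear chain, and the action of $T_{c_1}\cdots T_{c_{p-1}}$ on an arc crossing the block is understood explicitly (it is, up to isotopy, a ``shift''), so one checks directly that an arc entering a block and meeting some $c_i$ essentially must exit with strictly more intersections with the boundary-gluing region, which then feed into the next block without cancellation because the gluing polygons separate the blocks. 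Assembling these local statements via the Murasugi/Stallings–Gabai fibration structure already recalled above gives the global monotonicity, and hence the lemma.
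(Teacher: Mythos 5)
Your ``if'' direction is fine and matches the paper. The problem is the ``only if'' direction: your primary route rests on the no-cancellation claim that the weight $\sum_i |\phi_{(k)}(\gamma)\cap c_i|$ is non-decreasing in $k$ and strictly increases at the first essential intersection, and this is exactly the hard point, left unproven. It is not true in general that a positive twist ``only adds intersection strands'': twisting along $c_i$ can create bigons with, and hence reduce geometric intersection with, other curves $c_j$ that meet $c_i$, so a strand count genuinely can fluctuate; you acknowledge this, but the proposed fix is too vague to check. In particular the curves $c_i$ are not segregated into disjoint chains by the Murasugi-sum spheres: curves coming from bands attached to a common Seifert disk interact through that disk, the arc $\gamma$ may enter and leave the blocks repeatedly, and the assertions that a block's twists act on a crossing arc ``up to isotopy as a shift'' and force ``strictly more intersections with the boundary-gluing region'' are not substantiated. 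As written, the main argument has a genuine gap.

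The fallback you gesture at -- right-veering -- is in fact the paper's actual proof, and it is short enough that you should make it the argument rather than a citation. Fix a basepoint $\ast\in\partial\Sigma_\beta$ and consider the total order $\prec_{\sf right}$ on isotopy classes of properly embedded arcs starting at $\ast$ (in minimal position, $\gamma\prec_{\sf right}\gamma'$ if $\gamma'$ leaves $\ast$ to the right of $\gamma$). This order is preserved by every mapping class, and each positive Dehn twist satisfies $\gamma\preceq_{\sf right}T_c(\gamma)$ for all arcs $\gamma$. Now put $\gamma$ in minimal position with respect to all the $c_i$ and take the \emph{largest} index $i$ with $\gamma\cap c_i\neq\emptyset$: the twists $T_{c_j}$ with $j>i$ fix $\gamma$, the twist $T_{c_i}$ moves it strictly to the right, and applying $T_{c_1}\cdots T_{c_{i-1}}$ preserves that strict inequality while moving $\gamma$ only further to the right, so $\phi(\gamma)\succ_{\sf right}\gamma$ and hence $\phi(\gamma)\neq\gamma$. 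Note also that the statement you attribute to Honda--Kazez--Mati\'c (a right-veering mapping class fixes an arc only if the arc avoids all the twisting curves) is not their characterization and does not follow from right-veeringness of $\phi$ alone; it is a statement about the given positive factorization, and the maximal-index argument just described is exactly the special case you would have to prove anyway.
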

\begin{proof}
By isotopy we put $\gamma$ so that it attains the minimum geometric intersections for all $c_i$. The `if' direction is obvious since $\gamma \cap c_i =\emptyset$ implies $T_{c_i}(\gamma)=\gamma$. We show that if $\gamma \cap c_i \neq \emptyset$ for some $i$, then $\phi(\gamma)\neq \gamma$. To see this, we use the \emph{right-veering order} $\prec_{\sf right}$ on embedded arcs.

We fix a base point $\ast \in \partial \Sigma_{\beta}$ and let $\mathcal{A}$ be the isotopy classes of oriented properly embedded arc that begins at $\ast$. For two arc $\gamma,\gamma' \in \mathcal{A}$ we define $\gamma \prec_{\sf right} \gamma'$ if near the base point $\ast$, $\gamma'$ lies on the right-hand side of $\gamma$, when $\gamma$ and $\gamma'$ are put so that they have the minimum geometric intersection. Then $\prec_{\sf right}$ defines a total ordering of $\mathcal{A}$.

By definition, this total ordering is invariant under the action of mapping class group of $\Sigma_{\beta}$; for $\theta \in MCG(\Sigma_{\beta})$, $\gamma \prec_{\sf right} \gamma'$ implies $\theta(\gamma) \prec_{\sf right} \theta(\gamma')$. Also, the right-handed Dehn twist $T_c$ have the property that $\gamma \preceq_{\sf right} T_{c}(\gamma)$ for all $\gamma \in \mathcal{A}$.
Now we assume that if $\gamma \cap c_i \neq \emptyset$ for some $i$, among such $i$, we take maximum one so if $j>i$ then  $\gamma \cap c_j =\emptyset$. 

Then $T_{c_i}(\gamma) \succ_{\sf right} \gamma$, hence
\begin{align*}
\phi(\gamma) &= T_{c_1} \cdots T_{c_{i-1}}T_{c_i} T_{c_{i+1}}\cdots T_{c_{\ell-(n-1)}}(\gamma) =  T_{c_1} \cdots T_{c_{i-1}}(T_{c_i}(\gamma))\\
& \succ_{\sf right} T_{c_1} \cdots T_{c_{i-1}}(\gamma)\\
& \succeq_{\sf right} \gamma
\end{align*}
so $\phi(\gamma) \neq \gamma$. 
\end{proof}

\begin{proof}[Proof of Theorem \ref{theorem:pbraid-visible}(ii)]
Assume that $K$ is composite. Then the intersection of a decomposing sphere and $\Sigma_{\beta}$ gives rise to a non-boundary parallel, properly embedded arc $\gamma$ such that $\phi(\gamma)=\gamma$. 
By lemma \ref{lemma:App1}, by isotopy we assume that $\gamma \cap c_{i} = \emptyset$ for all $c_i$.

Assume that one of the endpoint of $\gamma$ lies on the $i$-th Seifert disk $D_{i}$.  From the description of the monodromy $\phi$ and curves $c_i$, by pushing the arc $\gamma$ onto the boundary, we find an another (non-boundary-parallel) arc $\gamma'$ contained in $D_i$ such that $\gamma' \cap c_i=\emptyset$ for all $i$. This implies $\gamma'$ separates $D_{i}$ into two pieces so that one piece contains all the twisted bands attached to $D_{i-1}$, and the other piece contains all twisted bands attached to $D_{i+1}$. By adding an arc that is parallel to $\partial D_i$ we get a decomposing circle of $D$ (Figure \ref{fig:decompose} (ii)).

\begin{figure}[htbp]
\vspace{0.6cm}
\includegraphics*[width=65mm]{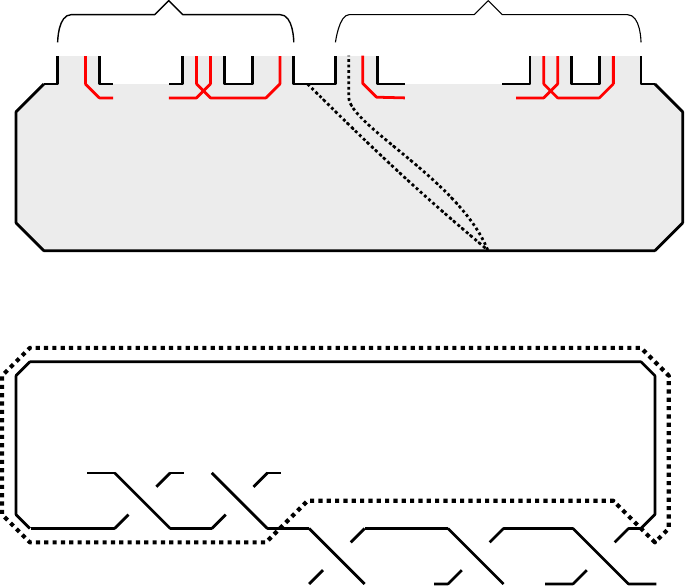}
\begin{picture}(0,0)
\put(-210,180) {(i)}
\put(-210,70) {(ii)}
\put(-190,165) {\small Connected to $D_{i-1}$}
\put(-90,165) {\small Connected to $D_{i+1}$}
\put(-170,95) {$D_i$}
\put(-60,110) {$\gamma$}
\put(-90,100) {$\gamma'$}
\end{picture}
\caption{(i) An arc $\gamma$ disjoint from $c_i$ can be taken so that it is contained in a single Seifert disk $D_i$. (ii) The existence of an arc $\gamma'$ shows that the diagram is composite.} 
\label{fig:decompose}
\end{figure} 

\end{proof}

\subsection*{Skein resolution in the realm of positive braid links}

As we mentioned earlier, Lemma \ref{lemma:technical} implies Theorem \ref{theorem:resolution}.

\begin{proof}[Proof of Theorem \ref{theorem:resolution}]
We prove the theorem by induction on $n$, where $n$ is the minimum number of strands that is needed to represent $K$ as the closure of a positive braid $\beta$.

Assume, to the contrary that $\beta$ is $i$-square free for all $i$. Then Lemma \ref{lemma:technical} shows that $\beta$ is written so that it contains at most one $\sigma_{n-1}$. Since $\beta$ is minimum positive braid representative, $\beta$ contains no $\sigma_{n-1}$ so $K$ is the split union of other positive braid link that is a closure of an $(n-1)$ positive braid and the unknot, so by induction, $K$ should be unlink. 
\end{proof}

\end{document}